\newtheorem{thm}{Theorem}[section]
\newtheorem{lem}[thm]{Lemma}
\newtheorem{cor}[thm]{Corollary}
\newtheorem{prop}[thm]{Proposition}
\newtheorem{example}[thm]{Example}
\theoremstyle{definition}
\newtheorem{rmk}[thm]{Remark}
\numberwithin{equation}{section}
\def\al{\alpha}
\def\de{\delta}
\def\la{\lambda}
\def\Ga{\Gamma}
\def\De{\Delta}
\def\C{\mathbb{C}}
\def\N{\mathbb{N}}
\def\cD{\mathcal D}
\newcommand{\rFs}[5]{\,_{#1}F_{#2} \left( \genfrac{.}{.}{0pt}{}{#3}{#4};#5 \right)}
\newcommand{\SU}{\mathrm{SU}}
\newcommand{\U}{\mathrm{U}}
\newcommand{\bbC}{\mathbb{C}}
\title[Matrix valued Laguerre polynomials]{Matrix valued Laguerre polynomials}
 \author{Erik Koelink}
 \address{IMAPP, Radboud Universiteit, 
 Heyendaalseweg 135, 
 6525 GL Nijmegen, 
 the Netherlands}
 \email{e.koelink@math.ru.nl}
 \author{Pablo Rom\'an}
 \address{CIEM,
 FaMAF, Universidad Nacional de C\'ordoba, Medina Allende s/n Ciudad
 Universitaria, C\'ordoba, Argentina}
 \email{roman@famaf.unc.edu.ar}
\date{\today}
\begin{document}

\dedicatory{Dedicated to Ben de Pagter on the occasion of his retirement. \\
In admiration of his mathematics, personal leadership and calmness.}

\begin{abstract} Matrix valued Laguerre polynomials are introduced via 
a matrix weight function involving several degrees of freedom using the 
matrix nature. Under suitable conditions on the parameters the 
matrix weight function satisfies matrix Pearson equations, which allow 
to introduce shift operators for these polynomials. 
The shift operators lead to explicit expressions for the structures of
these matrix valued Laguerre polynomials, such as a Rodrigues 
formula, the coefficients in the three-term recurrence, differential
operators, and expansion formulas. 
\end{abstract}

\maketitle

\section{Introduction} 

Matrix valued polynomials have a long history, introduced in the 1940s by M.G.~Krein 
in a study of the matrix moment problem as well as in the study of operators 
with higher deficiency indices \cite{Krein1}, \cite{Krein2}. Since its introduction
several other applications have been linked to matrix valued orthogonal polynomials,
such as relations and applications to e.g. spectral theory 
of higher order recurrences, scattering theory and differential operators, see e.g. \cite{ApteN}, \cite{DuraVA}, \cite{Gero}, \cite{GroeIK}, 
\cite{GroeK}.
As in the study of classical, i.e. scalar-valued, polynomials there are several possible approaches, and 
this can be roughly divided in general methods and studying special classes.
More general methods can be methods such as studying the general moment problem, general 
approximation properties, etc. The special classes are typically introduced as arising in or 
related to the 
study of other topics, such as solutions to differential operators, in terms of 
hypergeometric functions, numerical analyis, representation theory, applications in 
mathematical physics, etc. 
This is particularly true for the matrix valued case of orthogonal polynomials, but the 
theory is not as fully developed as that of orthogonal polynomials. We
refer to Damanik et al. \cite{DamaPS} for an overview of theory and references. 

Important examples of the special cases are the ones that arise in representation 
theory via matrix valued spherical functions. Even though the 
general spherical functions go back at least to the
work of Godement in the 1950s, see references in \cite{GangV}, the relation to 
matrix valued orthogonal polynomials is of a much later time. 
A first step was made by 
Koornwinder \cite{Koor-SIAM85} realizing vector-valued orthogonal polynomials, and 
next Gr\"unbaum, Pacharoni, Tirao \cite{GrunPT} studied matrix valued orthogonal
polynomials related to the symmetric pair $(\SU(3), \U(2))$ using differential
operators. These papers have been motivational to understand the situation 
of matrix valued orthogonal polynomials in connection to representation theory,
and in particular we refer to \cite{Alde}, \cite{HeckvP}, 
\cite{KoelvPR1}, \cite{KoelvPR2}, \cite{KoelvPR3}, \cite{Prui}, \cite{vPR}.

It is well-known that in the scalar-case many polynomials in the Askey scheme,
see \cite{AskeW}, \cite{KoekLS}, \cite{KoekS}, have an interpretation in 
representation theory, but typically only for a very limited set of parameters
involved. It is then an important question how to extend to a general set of 
parameters, and this is a non-trivial question since usually the limited set is finite.
An important example is the extension of multivariable spherical functions on
Riemannian symmetric spaces to the general solutions of integrable systems 
in terms of Heckman-Opdam functions, see Heckman's lectures in \cite{HeckS}. 
For the case of the extension of \cite{KoelvPR1}, \cite{KoelvPR2} from
matrix valued Chebyshev polynomials to matrix valued Gegenbauer polynomials,
this is done in \cite{Koe:Rio:Rom}. 

In \cite{Koe:Rio:Rom} it turns out that one of the essential features that 
make the analytic extension work is the appropriate $LDU$-decomposition 
of the matrix weight. Here $L$ is an unipotent lower triangular matrix 
consisting of Gegenbauer polynomials, and the diagonal matrix $D$ is 
intimately related to the weight of the classical Gegenbauer polynomials.
These features make it possible to introduce matrix valued analogues of 
shift operators, which make it possible to derive properties in an 
explicit fashion, cf. \cite{HeckS} for the importance of shift operators
in the multivariable setting. 
Moreover, it turns out that the inverse matrix polynomial $L$ has a simple 
expression in terms of Gegenbauer polynomials, due to Cagliero and Koornwinder 
\cite{CaglK}. 
It is then a natural question to find the appropriate analogues for the 
simplest classical polynomials in the Askey scheme, i.e. the Hermite and 
Laguerre polynomials. For the Hermite polynomials it is possible
to use a limit from Gegenbauer polynomials, and this suggests how to
deal with the appropriate matrix valued analogue of the Hermite
polynomials, see \cite{IsmaKR-MV}. It tuns out that then the matrix polynomial
$L$ is, up to a constant matrix, a matrix exponential, and it is shown
that, for certain values of the parameters, it is related to examples by 
Gr\"unbaum and Dur\'an in \cite{DuraG}, \cite{DuraG2} and \cite{Dura2}. 
Again, as shown in \cite{IsmaKR-MV}, the shift operators allow us to 
be explicit in describing the properties of these matrix polynomials.

The purpose of this paper is to describe explicitly the matrix valued
analogue of the Laguerre polynomials following the approach sketched above.
So we introduce in Section \ref{sec:MVLaguerrepol} 
the matrix weight function in terms of an $LDU$-decomposition 
where the matrix polynomial $L$ is an unipotent lower triangular matrix
with entries given in terms of Laguerre polynomials in analogy with 
the $LDU$-decomposition of the matrix weight in \cite{Koe:Rio:Rom}.
The corresponding monic matrix valued Laguerre polynomials are eigenfunctions
of a second order matrix valued differential operator which is symmetric
with respect to the weight, see Section \ref{sec:symmetricDE}. 
In Section \ref{sec:Pearson} we use the degrees of freedom introduced 
in the matrix weight to impose two non-linear conditions, which allow us to derive
Pearson equations for the weight. For specific values of the parameters, the 
matrix-valued Laguerre polynomials are closely related to the family studied in \cite{DuradlI}. 
In turn, in Section \ref{sec:shift}
this allows us to calculate the squared norm explicitly, and we 
give an Rodrigues formula to generate the polynomials. 
We naturally obtain from the Pearson equations a matrix valued differential
operator which is factored as lowering and raising operator. We
calculate its Darboux transform in terms of the matrix differential operator
obtained before. As an application of the shift operators, we give
expressions for the matrix coefficients in the three-term recurrence relation 
and we apply it to find an expansion formula for the matrix valued 
Laguerre polynomials. 
We briefly describe three families of solutions to the conditions. 
In particular, this family of matrix valued orthogonal polynomials fits into 
the setting of Cantero, Moral and Vel\'azquez \cite{CantMV}, since the derivative
of the matrix valued Laguerre polynomials form a family of matrix valued 
orthogonal polynomials as well. However, \cite{CantMV} does not contain 
any explicit family of arbitrary size of comparable complexity as in this paper. 
For convenience, Section \ref{sec:prelim} describes some basic results
on matrix valued orthogonal polynomials. 

As mentioned above, the matrix valued analogue of the Laguerre polynomials
in this paper is not motivated by a limit transition of the matrix valued
Gegenbauer polynomials \cite{Koe:Rio:Rom}, which find their origin in 
group theory and the use of shift operators. This is the case for the 
Hermite polynomials of \cite{IsmaKR-MV}, and it turns out that the 
matrix valued Gegenbauer polynomials and matrix valued Hermite polynomials
share some features, such as the simple structure of the matrices in the 
three-term recurrence relation. Indeed, the $B_n$ and $C_n$ of \eqref{eq:three-term-r}
are tridiagonal and diagonal, respectively. This allows, with the results 
of \cite[Lemma~3.1]{KR15}, to give a simple proof of the triviality of 
$\mathcal{A}_W$ and $A_W$, see Section \ref{sec:prelim}. In the case of the 
matrix valued Laguerre polynomials the expressions for $B_n$ and $C_n$
are explicit, but more complex, see Proposition \ref{prop:3termrecur} 
and a proof of the irreducibility along  these lines does not seem to work. 
We conjecture that $\mathcal{A}_W$ and $A_W$ 
are trivial as well, but we have no full proof. 
Another important difference is that in the case of the  matrix valued Hermite, respectively Gegenbauer, 
polynomials, the entries of these polynomials can be explicitly calculated in terms
of the scalar Hermite polynomials and Hahn polynomials, respectively Gegenbauer and Racah polynomials,
see \cite[Thm.~3.13]{IsmaKR-MV}, \cite[Thm.~3.4]{Koe:Rio:Rom}. In the Laguerre setting of this paper,
this is not possible because the eigenvalue matrix of Proposition \ref{prop:symmetry_D} is not 
diagonal.

\medskip
\noindent
\textbf{Acknowledgements.}
Erik Koelink gratefully acknowledges the support and hospitality of FaMAF at Universidad Nacional 
de C\'ordoba and the support of an Erasmus+ travel grant.
The work of Pablo Rom\'an was supported by Radboud Excellence Fellowship, CONICET grant PIP 112-200801-01533, FONCyT grant PICT 2014-3452 and by SeCyT-UNC.

\section{Preliminaries}\label{sec:prelim}

In this section we give some background on matrix valued orthogonal polynomials and we fix the notation, 
for more information we refer to e.g. \cite{DamaPS}, \cite{DuraG}, \cite{GrunT}. 

We consider a complex $N\times N$ matrix valued integrable function 
$W\colon (a,b)\to M_N(\mathbb{C})$, and we 
assume that $W(x)>0$ almost everywhere and such that the weight has finite moments of all orders.
Here $M_N(\mathbb{C})$ is the algebra of complex $N\times N$-matrices.
Integration of a matrix valued function is separate integration of each matrix entry, so that 
the integral is a matrix. 
Here we allow the endpoints $a$ and $b$ to be infinite. 
By $M_N(\mathbb{C})[x]$ we denote the algebra over $\mathbb{C}$  of all polynomials in 
$x$ with coefficients in $M_N(\mathbb{C})$. 
The weight matrix $W$ induces a Hermitian sesquilinear form
\begin{equation}
\label{eq:HermitianForm}
\langle P,Q \rangle=\int_a^b P(x)W(x)Q(x)^*dx \in M_N(\C),
\end{equation}
such that for all $P,Q,R\in M_N(\bbC)[x]$, $T\in M_N(\bbC)$ and $a,b\in \mathbb{C}$ 
the following properties are satisfied
$$
\langle aP+bQ,R\rangle=a\langle P,R\rangle+b\langle Q,R\rangle, \qquad \langle TP,Q\rangle=T\langle P,Q\rangle, \langle P,Q\rangle^*=\langle Q,P\rangle.
$$
Moreover
$$
\langle P,P \rangle = 0 \quad \Longleftrightarrow  \quad P=0,
$$
see for instance \cite{GrunT}. 
Given a weight matrix $W$ there exists a unique sequence of monic orthogonal polynomials $\{P_n\}_{n\geq 0}$ 
in $M_N(\bbC)[x]$. Another sequence of $\{R_n\}_{n\geq 0}$ of orthogonal polynomials in 
$M_N(\bbC)[x]$ is of the form $R_n(x)=A_nP_n(x)$ for some $A_n\in \operatorname{GL}_N(\mathbb{C})$.

Orthogonal polynomials  satisfy a three-term recurrence relation and for monic
orthogonal polynomials $\{P_n\}_{n\geq 0}$ we have
\begin{equation}
\label{eq:three-term-r}
xP_n(x)=P_{n+1}(x)+B_{n}(x)P_n(x)+C_nP_{n-1}(x), \quad n\geq 0,
\end{equation}
where $P_{-1}=0$ and $B_n$, $C_n$ are matrices depending on $n$ and not on $x$.

We say that a matrix weight $W$  is reducible to weights of smaller size if there exists 
a constant matrix $M$ such that 
$MW(x)M^\ast = \mathrm{diag}(W_1(x),\ldots, W_k(x))$ for all $x\in [a,b]$, where $W_1,\ldots,W_k$, $k>1$, 
are weights of size less than  $N$. In such a case, the real vector space
$$
{\mathcal{A}}_{W}= \{ Y \in \mathrm{Mat}_N(\bbC)  \mid YW(x) = W(x)Y^\ast, \quad \text{for all }x\in [a,b]\},
$$
is non-trivial. On the other hand, if the commutant algebra
$$
A_W= \{Y \in\mathrm{Mat}_N(\bbC) \mid YW(x) = W(x)Y,\quad \text{for all }x\in [a,b]\},
$$
is nontrivial, then the weight $W$ is reducible via a unitary matrix $M$. 
The relation between $\mathcal{A}_W$ and $A_W$ is the following; 
if $\mathcal{A}_{W}$ is $\ast$-invariant, the weight $W$ reduces to weights of smaller size if and only if the commutant algebra $A_{W}$ is not trivial, see \cite{KR15}.

\section{Matrix valued Laguerre-type polynomials}\label{sec:MVLaguerrepol}

We introduce a family of matrix valued weight functions by giving an explicit $LDU$-decomposition, 
and in Section \ref{sec:Pearson} we give conditions on the parameters involved. 
The matrix polynomial $L$ in this decomposition is a unipotent lower triangular matrix polynomial, 
whose entries are multiples of Laguerre polynomials independent of the dimension of the matrix. 
This structure is inspired by a family of matrix valued orthogonal polynomials 
related to the the symmetric pair $(G,K)=(\SU(2)\times \SU(2), \SU(2))$ ($K$ diagonally embedded), 
which involves a lower triangular matrix whose entries are Gegenbauer polynomials, see \cite{KoelvPR1}, \cite{KoelvPR2}, \cite{Koe:Rio:Rom}, as well as by the extension to matrix valued $q$-orthogonal polynomials related to the quantum analogue of $(G,K)$, see \cite{AldeKR}.  
       
In the paper the integer $N\geq 1$ is fixed. Let $\mu=(\mu_1,\ldots,\mu_N)$ be a sequence of non-zero coefficients and $\alpha>0$. Then $L_{\mu}^{(\al)}$ is the $N\times N$ unipotent lower triangular matrix defined by
\begin{equation}
\label{eq:matrix-L-Laguerre}
L^{(\alpha)}_{\mu}(x)_{m,n}=\begin{cases} \frac{\mu_{m}}{\mu_{n}} L^{(\alpha+n)}_{m-n}(x), & m\geq n,\\
0& n<m. 
\end{cases}
\end{equation}
The Laguerre polynomials $L^{(\al)}_n(x)$ can be defined by the generating function
\begin{equation}\label{eq:Laguerre-genfun}
(1-t)^{-1-\al} \exp\left(\frac{xt}{t-1}\right) = \sum_{n=0}^\infty L^{(\al)}_n(x)\, t^n,
\end{equation}
see e.g. \cite[18.12.13]{DLMF}, \cite[(1.11.10)]{KoekS}.
Note that the $\mu$ dependence in \eqref{eq:matrix-L-Laguerre} 
is by conjugation with the matrix $S_\mu=\text{diag}(\mu_1,\ldots,\mu_N)$,
so that $L^{(\alpha)}_{\mu}(x)= S_\mu L^{(\al)}(x) S_\mu^{-1}$, where $L^{(\al)}(x)$ is 
\eqref{eq:matrix-L-Laguerre} with $\mu_i=1$ for all $i$. 
It is a direct consequence of the formula $\frac{d}{dx}L^{(\alpha)}_n=-L^{(\alpha+1)}_{n-1}$,
see e.g. \cite[(1.11.6)]{KoekS}, that the matrix $L^{(\alpha)}$ satisfies
\begin{equation}
\label{eq:derivativeL}
\frac{dL^{(\alpha)}}{dx}(x)=L^{(\alpha)}(x)\, A, \qquad A = -\sum_{k=2}^N E_{k,k-1} \quad \Longrightarrow \quad L^{(\alpha)}(x)=L^{(\alpha)}(0)e^{xA}.
\end{equation}
By conjugation we have $A_\mu= S_\mu A S_\mu^{-1}$, 
$\frac{dL^{(\alpha)}_\mu}{dx}(x)=L^{(\alpha)}_\mu(x)\, A_\mu$ and 
$L^{(\alpha)}_\mu(x)=L^{(\alpha)}_\mu(0)e^{xA_\mu}$. Because of the 
dependence of the parameter of the Laguerre polynomial on $n$ in \eqref{eq:matrix-L-Laguerre},
$L^{(\al)}(x)$ does not commute with $A$, see Lemma \ref{lem:commutation_reltatios}. 

\begin{rmk}\label{rmk:HermiteLaguerre}
Comparing the expression $L^{(\al)}(x)$ of \eqref{eq:matrix-L-Laguerre}, \eqref{eq:derivativeL}
with the corresponding $L$ operator in the matrix valued Hermite case, see \cite[(3.1), Prop.~3.1]{IsmaKR-MV},
we see that the exponential part of $L^{(\al)}(x)$ and of the $L$ in \cite{IsmaKR-MV} is the same up to 
a factor $-2$. So, denoting $L$ of \cite{IsmaKR-MV} for the moment by $H$, we see that 
$L^{(\al)}(-2x) = L^{(\al)}(0) H(0)^{-1}H(x)$. Taking the $(m,n)$-entry we get connection coefficients relating
Laguerre polynomials and Hermite polynomials, explicitly
\[
L^{(\al+n)}_{m-n}(-2x) = \sum_{p=n}^m \bigl( L^{(\al)}(0) H(0)^{-1} \bigr)_{m,p} \frac{1}{(p-n)!} H_{p-n}(x)
\]
where $H_n$ are the standard Hermite polynomials \cite[\S 1.13]{KoekS}. Since $H(0)^{-1}$ is known
by \cite[Prop.~3.1]{IsmaKR-MV}, we can rewrite the $\bigl( L^{(\al)}(0) H(0)^{-1} \bigr)_{m,p}$ 
explicitly as a sum, and we obtain
\begin{equation}\label{eq:conncoefHermiteLaguerre}
L^{(\al+n)}_{m-n}(-2x) = \sum_{p=n}^m \frac{(\al+p+1)_{m-p}}{(m-p)!\, (p-n)!}
\rFs{2}{2}{\frac12(p-m), \frac12(p-m+1)}{\frac12(\al+p+1), \frac12(\al+p+2)}{1}
H_{p-n}(x). 
\end{equation}
\end{rmk}

Observe that the inverse of $L^{(\alpha)}_{\mu}$ is a unipotent lower triangular polynomial matrix function. 
Hence, its inverse  is a unipotent lower triangular polynomial matrix function
as well. 
In order to obtain an explicit expression for $(L^{(\alpha)}_{\mu}(x))^{-1}$, we need the following result of \cite[\S 1]{Koekoek1999}, which can be obtained from the generating function 
\eqref{eq:Laguerre-genfun} for the Laguerre polynomials.

\begin{lem}\label{lem:Linverse}
For $i,j\in \mathbb{N}$, $i\geq j$, the following inversion formula for the Laguerre polynomials holds true.
$$
\sum_{k=j}^{i} L_{i-k}^{(-\alpha-i-1)}(-x)L_{k-j}^{(\alpha+j)}(x)=\delta_{i,j}.
$$
\end{lem}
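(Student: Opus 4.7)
The plan is to use the generating function \eqref{eq:Laguerre-genfun} to recognize the sum as the coefficient of $t^{i-j}$ in a Cauchy product that simplifies dramatically.

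First I would set $m=i-j\geq 0$ and substitute $\ell=k-j$ to rewrite the sum as
\[
\sum_{\ell=0}^{m} L^{(-\alpha-i-1)}_{m-\ell}(-x)\, L^{(\alpha+j)}_{\ell}(x),
\]
which is exactly the coefficient of $t^m$ in the product of the two generating series
\[
\Bigl(\sum_{p\geq 0} L^{(-\alpha-i-1)}_p(-x)\, t^p\Bigr) \Bigl(\sum_{q\geq 0} L^{(\alpha+j)}_q(x)\, t^q\Bigr).
\]

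Next, I apply \eqref{eq:Laguerre-genfun} to each factor. With parameter $-\alpha-i-1$ and argument $-x$, the first factor equals $(1-t)^{\alpha+i}\exp\bigl(-xt/(t-1)\bigr)$; with parameter $\alpha+j$ and argument $x$, the second equals $(1-t)^{-1-\alpha-j}\exp\bigl(xt/(t-1)\bigr)$. The crucial point is that the exponential factors cancel exactly, leaving
\[
(1-t)^{\alpha+i-1-\alpha-j} = (1-t)^{m-1}.
\]

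Finally, I extract the coefficient of $t^m$ in $(1-t)^{m-1}$. For $m=0$ this is the constant term of $(1-t)^{-1}=\sum_{n\geq 0} t^n$, which equals $1$; for $m\geq 1$ the function $(1-t)^{m-1}$ is a polynomial of degree $m-1$, so the coefficient of $t^m$ vanishes. Hence the sum equals $\delta_{m,0}=\delta_{i,j}$, which is the desired identity. There is no real obstacle here: the proof hinges entirely on spotting that the choice of parameters $-\alpha-i-1$ (with argument $-x$) against $\alpha+j$ (with argument $x$) is designed precisely so that both the exponential factors cancel and the exponents of $(1-t)$ combine into a non-negative power when $m\geq 1$.
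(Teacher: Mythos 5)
Your proof is correct and follows exactly the route the paper indicates: the paper does not write out a proof of this lemma but attributes it to Koekoek and notes it "can be obtained from the generating function" \eqref{eq:Laguerre-genfun}, which is precisely the Cauchy-product computation you carry out (the exponentials cancel, the powers of $(1-t)$ combine to $(1-t)^{i-j-1}$, and the coefficient of $t^{i-j}$ is $\delta_{i,j}$). The argument is valid as a formal power series identity for arbitrary parameter values, so the negative parameter $-\alpha-i-1$ causes no difficulty.
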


\begin{cor}
The inverse of $L^{(\alpha)}_{\mu}$ is given explicitly by
$$
(L^{(\alpha)}_{\mu}(x)^{-1})_{m,n}=
\begin{cases} \frac{\mu_{m}}{\mu_{n}} \, L_{m-n}^{(-\alpha-m-1)}(-x), & m\geq n,\\
0& n<m.
\end{cases}
$$
\end{cor}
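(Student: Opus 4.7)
The plan is to verify the claimed formula directly using the inversion identity in Lemma \ref{lem:Linverse}. The conjugation relation $L^{(\alpha)}_\mu(x) = S_\mu L^{(\alpha)}(x) S_\mu^{-1}$ reduces the problem to the case $\mu_i \equiv 1$: if one establishes that the matrix $M(x)$ with entries $M(x)_{m,n} = L^{(-\alpha-m-1)}_{m-n}(-x)$ for $m\geq n$ and $0$ otherwise is the inverse of $L^{(\alpha)}(x)$, then conjugation by $S_\mu$ immediately produces the factor $\mu_m/\mu_n$ in the stated formula.

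To verify $M(x) L^{(\alpha)}(x) = \mathrm{I}$, I would compute the $(i,j)$-entry of the product. Since both $M(x)$ and $L^{(\alpha)}(x)$ are lower triangular, the only nonzero contributions come from indices $k$ with $j \leq k \leq i$; in particular, the product vanishes identically when $i<j$, which matches the identity matrix. For $i \geq j$, the entry becomes
\[
\sum_{k=j}^{i} M(x)_{i,k}\, L^{(\alpha)}(x)_{k,j} \;=\; \sum_{k=j}^{i} L^{(-\alpha-i-1)}_{i-k}(-x)\, L^{(\alpha+j)}_{k-j}(x),
\]
which is precisely the sum appearing in Lemma \ref{lem:Linverse} and therefore equals $\delta_{i,j}$. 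Hence $M(x) = L^{(\alpha)}(x)^{-1}$.

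Finally, since $L^{(\alpha)}_\mu(x)^{-1} = S_\mu L^{(\alpha)}(x)^{-1} S_\mu^{-1}$, taking the $(m,n)$-entry introduces the factor $\mu_m/\mu_n$ and yields the formula in the statement. The only potential subtlety is bookkeeping: making sure the triangular structure of both factors forces the summation range in Lemma \ref{lem:Linverse} to match exactly (and that the exponent $-\alpha-i-1$ in Lemma \ref{lem:Linverse} corresponds to $-\alpha-m-1$ in the final formula through the choice $i=m$, not $i=k$). Beyond that, the proof is essentially a direct application of the cited inversion identity, so I expect no serious obstacle.
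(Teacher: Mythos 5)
Your proposal is correct and follows essentially the same route as the paper: multiply the candidate inverse against $L^{(\alpha)}(x)$ entrywise, recognize the resulting sum as the inversion identity of Lemma \ref{lem:Linverse}, and conjugate by $S_\mu$ to recover the factor $\mu_m/\mu_n$. The bookkeeping point you flag (that the superscript $-\alpha-i-1$ is tied to the row index of the left factor) is handled identically in the paper's computation.
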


\begin{proof}
For $r\geq s$, we have
\begin{align*}
\left(L^{(\alpha)}(x)^{-1}L^{(\alpha)}(x)\right)_{r,s} = \sum_{k=1}^N (L^{(\alpha)}(x))^{-1}_{r,k} \, 
L^{(\alpha)}(x)_{k,s}= \sum_{k=s}^r  L^{(-\alpha-r-1)}_{r-k}(-x)L^{(\alpha+s)}_{k-s}(x)= \delta_{r,s}.
\end{align*}
The case $r<s$ follows with an analogous calculation, and now conjugate with $S_\mu$. 
\end{proof}

Note that Lemma \ref{lem:Linverse} follows directly from a generating function, and the same holds for 
the Hermite case \cite{IsmaKR-MV}. For the other cases \cite{KoelvPR2}, \cite{Koe:Rio:Rom} this 
is more involved, and the necessary result has been obtained by Cagliero and Koornwinder \cite{CaglK}
and for the Gegenbauer case a $q$-analogue is given by Aldenhoven \cite{Alde}.

Writing $L_\mu^{(\alpha)}(x)$ as the product of a constant matrix times an exponential function as in \eqref{eq:derivativeL} allows us to give an explicit relation between the matrices with different parameters.

\begin{lem}\label{lem:relation-alpha-lambda}
Let $L_\mu^{(\alpha)}(x)$ be the matrix polynomial as in \eqref{eq:matrix-L-Laguerre} and let $\lambda>0$. Then 
$$
L_\mu^{(\alpha)}(x)=M^{(\alpha,\lambda)}_\mu L_\mu^{(\lambda)}(x),\qquad 
M^{(\alpha,\lambda)}_\mu=L_\mu^{(\alpha)}(x)L_\mu^{(\lambda)}(x)^{-1}=
L_\mu^{(\alpha)}(0)L_\mu^{(\lambda)}(0)^{-1}.
$$
Moreover,  $M^{(\alpha,\lambda)}_\mu$ is explicitly given by
$$
M^{(\alpha,\lambda)}_\mu=\sum_{k=0}^{N-1} \frac{(\alpha-\lambda)_k}{k!} (-1)^k A_\mu^k.
$$
\end{lem}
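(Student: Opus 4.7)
The plan has two independent pieces: (i) show that the product $L_\mu^{(\alpha)}(x)L_\mu^{(\lambda)}(x)^{-1}$ is constant in $x$, and (ii) identify this constant matrix explicitly as the stated polynomial in $A_\mu$. First, since conjugation by $S_\mu$ does not affect either statement (note $A_\mu=S_\mu A S_\mu^{-1}$ and $L_\mu^{(\alpha)}(x)=S_\mu L^{(\alpha)}(x)S_\mu^{-1}$), I will work with $\mu_i=1$ throughout and conjugate back at the end.

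For part (i), set $M(x)=L^{(\alpha)}(x)L^{(\lambda)}(x)^{-1}$. Using \eqref{eq:derivativeL} twice gives $\frac{d}{dx}L^{(\alpha)}(x)=L^{(\alpha)}(x)A$ and, via the product rule applied to $L^{(\lambda)}(x)L^{(\lambda)}(x)^{-1}=\Id$, the identity $\frac{d}{dx}L^{(\lambda)}(x)^{-1}=-A\,L^{(\lambda)}(x)^{-1}$. Differentiating $M(x)$ then yields $M'(x)=L^{(\alpha)}(x)A L^{(\lambda)}(x)^{-1}-L^{(\alpha)}(x)A L^{(\lambda)}(x)^{-1}=0$, so $M(x)=M(0)=L^{(\alpha)}(0)L^{(\lambda)}(0)^{-1}$, which proves the first displayed equation. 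Equivalently, this follows by writing $L^{(\alpha)}(x)=L^{(\alpha)}(0)e^{xA}$ and $L^{(\lambda)}(x)^{-1}=e^{-xA}L^{(\lambda)}(0)^{-1}$.

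For part (ii), I will avoid a direct computation of $L^{(\alpha)}(0)L^{(\lambda)}(0)^{-1}$ (which would require summing a hypergeometric expression) and instead use the scalar connection formula
\[
L^{(\alpha)}_n(x)=\sum_{k=0}^{n}\frac{(\alpha-\lambda)_{n-k}}{(n-k)!}\,L^{(\lambda)}_k(x),
\]
which is read off from the generating function \eqref{eq:Laguerre-genfun} by splitting $(1-t)^{-\alpha-1}=(1-t)^{\lambda-\alpha}(1-t)^{-\lambda-1}$ and expanding $(1-t)^{\lambda-\alpha}=\sum_j \frac{(\alpha-\lambda)_j}{j!}t^j$ via the binomial series. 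Applying this with $\alpha,\lambda$ replaced by $\alpha+n,\lambda+n$ and reading off the $(m,n)$ entry, I obtain $L^{(\alpha)}(x)=M\,L^{(\lambda)}(x)$ with $M_{m,n}=\frac{(\alpha-\lambda)_{m-n}}{(m-n)!}$ for $m\ge n$ and $M_{m,n}=0$ otherwise.

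It remains to match this matrix with $\sum_{k=0}^{N-1}\frac{(\alpha-\lambda)_k}{k!}(-1)^kA^k$. Since $A=-\sum_{k=2}^N E_{k,k-1}$, a simple induction shows $(-A)^k=\sum_{i=1}^{N-k}E_{i+k,i}$ for $0\le k\le N-1$ and $A^N=0$, so $\bigl((-1)^k A^k\bigr)_{m,n}=\delta_{m-n,k}$ on the lower triangle. Summing against the coefficients $\frac{(\alpha-\lambda)_k}{k!}$ reproduces $M_{m,n}$ exactly, completing the identification. Conjugation by $S_\mu$ finally upgrades $A$ to $A_\mu$ and $M$ to $M^{(\alpha,\lambda)}_\mu$. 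The main bookkeeping issue is keeping the shift $n\mapsto \alpha+n$ in the Laguerre superscript aligned with the row/column indices so that a single scalar connection identity yields the matrix relation; the rest is routine.
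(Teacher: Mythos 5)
Your proof is correct and follows essentially the same route as the paper: constancy of $L^{(\alpha)}(x)L^{(\lambda)}(x)^{-1}$ from the cancellation of the exponential factors, followed by the Laguerre connection formula (DLMF 18.18.18, derived from the generating function and the binomial series) to identify the entries of $M^{(\alpha,\lambda)}$. The only addition is your explicit check that $(-1)^kA^k$ is the $k$-th subdiagonal shift, which the paper leaves implicit.
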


Note that the series for $M^{(\alpha,\lambda)}_\mu$ terminates, since $A_\mu$ is nilpotent.
Using the binomial theorem it can also be expressed as 
\[
M^{(\alpha,\lambda)}_\mu  =(1+A_\mu)^{\la-\al}. 
\]

\begin{proof} 
Note that it suffices to deal with the case $\mu_i=1$ for all $i$, and next conjugate by $S_\mu$.
Since $L^{(\alpha)}(x)$ and $(L^{(\lambda)}(x))^{-1}$ are unipotent lower triangular matrices, 
$M^{(\alpha,\lambda)}$ is unipotent lower triangular. Moreover, $M^{(\alpha,\lambda)}$ 
is constant, since the exponentials cancel.
Now the $(r,s)$-entry of $M^{(\alpha,\lambda)}L^{(\lambda)}(x)=L^{(\alpha)}(x)$, 
for $r>s$, is
\begin{align*}
\sum_{k=s}^r M^{(\alpha,\lambda)}_{r,k} L^{(\lambda+s)}_{k-s}(x)=  L^{(\alpha+s)}_{r-s}= 
\sum_{k=s}^{r} \frac{(\alpha-\lambda)_{r-k}}{(r-k)!} \, L^{(\lambda+s)}_{k-s},
\end{align*}
where we use \cite[18.18.18]{DLMF} in order to write the Laguerre polynomials with parameter 
$\lambda +s$ in terms of Laguerre polynomials with paramater $\alpha+s$. 
Note that the required identity \cite[18.18.18]{DLMF} is a direct consequence of the 
same generating function \eqref{eq:Laguerre-genfun} and the binomial theorem. 
We obtain
$$M^{(\alpha,\lambda)}_{r,k} = \frac{(\alpha-\lambda)_{r-k}}{(r-k)!}.$$
Conjugating by $S_\mu$, the lemma follows.
\end{proof}

We are now ready to introduce the weight matrix. For $\nu>0$ we define the matrix
\begin{equation}
\label{eq:weight_factorization}
W^{(\alpha,\nu)}_\mu(x)=L_\mu^{(\alpha)}(x)\,T^{(\nu)}(x)\,L_\mu^{(\alpha)}(x)^\ast,\qquad T^{(\nu)}(x)=e^{-x} \sum_{k=1}^N x^{\nu+k} \delta_k^{(\nu)}\, E_{k,k}.
\end{equation}
Here the parameters $\delta_k^{(\nu)}$, $1\leq k\leq N$, are to be determined later, see conditions 
\eqref{eq:condition_delta_Phi} and \eqref{eq:recursion-alphas} in Section \ref{sec:Pearson}.
For now we assume the condition $\delta_k^{(\nu)}>0$, $1\leq k\leq N$, so that the weight is positive definite. 
Moreover, since $\nu>0$ the factor $e^{-x}x^{\nu+k}$ in the entries of the diagonal matrix $T^{(\nu)}$ guarantees that all the moments exist.  Note that even $\nu>-2$ suffices for the existence of moments as well, but
we require $\nu>0$ in all cases in Examples \ref{ex:ax1}, \ref{ex:ax2}, \ref{ex:ax3}. 
By $\{P_n^{(\alpha, \nu)}\}_n$ we denote the sequence of monic orthogonal polynomials with respect to $W^{(\alpha, \nu)}_\mu$. We suppress the $\mu$-dependence in the notation for the polynomials and the related 
quantities, such as the squared norms, coefficients in the three-term recurrence relation, etc. 
Note that the structure in $T^{(\nu)}$ is motivated by the results of \cite{KoelvPR2}, \cite{Koe:Rio:Rom}. 

Observe that for real $\mu_{i}$'s we have 
\begin{align}
\label{eq:decompW_1}
W_\mu^{(\alpha, \nu)}(x)&=L_{\mu}^{(\alpha)}(x) T^{(\nu)}(x)   L_{\mu}^{(\alpha)}(x)^\ast = L^{(\alpha)}_\mu(0) e^{xA_\mu} T^{(\nu)}(x) e^{xA_\mu^\ast}  L^{(\alpha)}_\mu(0)^\ast.
\end{align}
From now on we assume that the coefficients $\mu_{i}$ are real and non-zero for all $i$.
 
 Using Lemma \ref{lem:relation-alpha-lambda} we obtain 
 \begin{equation}
 \label{eq:weight-alpha-lambda}
 W^{(\alpha, \nu)}_\mu = M^{(\alpha,\lambda)}_\mu \, W^{(\lambda, \nu)}_\mu (M^{(\alpha,\lambda)}_\mu)^\ast,
 \end{equation}
 for $\alpha, \lambda>0$. Denote by $ H^{(\alpha,\nu)}_n$ the $n$-th squared norms for the monic orthogonal polynomials:
 $$
 H^{(\alpha,\nu)}_n=\int_0^\infty P_n^{(\alpha,\nu)}(x) W^{(\alpha, \nu)}_\mu(x) P_n^{(\alpha,\nu)}(x)^\ast dx,
 $$
so that by \eqref{eq:weight-alpha-lambda} we have the following relation for the squared norms with different parameters
\begin{equation}
\label{eq:norms-diff-parameters}
H^{(\alpha,\nu)}_n=M^{(\alpha,\lambda)}_\mu H^{(\lambda,\nu)}_n (M^{(\alpha,\lambda)}_\mu)^\ast.
\end{equation}
Recall that the squared norms satisfy $H^{(\alpha,\nu)}_n>0$. 

\begin{prop}
\label{prop:norm-zero}
If we let $\alpha=\nu$, then the $0$-th moment $H_0^{(\nu, \nu)}$ is the diagonal matrix
$$(H_0^{(\nu, \nu)})_{j,j}=\int_0^\infty (W_\mu^{(\nu)}(x))_{j,j}\, dx=\frac{\mu_j^2 \, \Gamma(\nu+j+1)}{(j-1)!} \,\sum_{k=1}^j \frac{\delta^{(\nu)}_k}{\mu_k^2} (-1)^{k+1}(-j+1)_{k-1}.$$
\end{prop}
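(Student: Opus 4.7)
\medskip
\noindent\textbf{Proof plan.} Since $P_0^{(\nu,\nu)} = I$, the $0$-th squared norm is simply $\int_0^\infty W_\mu^{(\nu,\nu)}(x)\, dx$. The plan is to use the $LDU$-decomposition \eqref{eq:weight_factorization} to write out the $(i,j)$-entry of the weight explicitly as a finite sum of products of Laguerre polynomials against a Laguerre weight, and then apply the scalar orthogonality relations. The crucial observation is that when $\alpha=\nu$, the Laguerre parameter appearing in the $k$-th column of $L_\mu^{(\alpha)}$, namely $\alpha+k$, coincides exactly with the exponent $\nu+k$ of $x$ in the $k$-th diagonal entry of $T^{(\nu)}$. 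This alignment is what makes the integrals reduce to the classical Laguerre inner product.

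First I would expand, using \eqref{eq:matrix-L-Laguerre} and the fact that $L_\mu^{(\nu)}$ is lower triangular,
\[
W_\mu^{(\nu,\nu)}(x)_{i,j} \;=\; \sum_{k=1}^{\min(i,j)} \frac{\mu_i\mu_j}{\mu_k^2}\,\delta_k^{(\nu)}\,e^{-x}x^{\nu+k}\,L_{i-k}^{(\nu+k)}(x)\,L_{j-k}^{(\nu+k)}(x),
\]
where I use that the $\mu_i$ are real. Integrating term-by-term and applying the standard orthogonality relation
\[
\int_0^\infty e^{-x} x^{\nu+k} L_m^{(\nu+k)}(x)L_n^{(\nu+k)}(x)\,dx \;=\; \frac{\Gamma(\nu+k+n+1)}{n!}\,\delta_{m,n},
\]
see e.g.\ \cite[(1.11.2)]{KoekS}, all terms with $i\neq j$ vanish (since $i-k\neq j-k$), giving at once that $H_0^{(\nu,\nu)}$ is diagonal.

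For the diagonal entries, the same computation yields
\[
(H_0^{(\nu,\nu)})_{j,j} \;=\; \sum_{k=1}^{j} \frac{\mu_j^2}{\mu_k^2}\,\delta_k^{(\nu)}\,\frac{\Gamma(\nu+j+1)}{(j-k)!}.
\]
To reconcile this with the stated expression, I would rewrite $1/(j-k)!$ using the Pochhammer identity
\[
(-j+1)_{k-1} \;=\; (-1)^{k-1}\,\frac{(j-1)!}{(j-k)!},
\qquad \text{i.e.}\qquad \frac{1}{(j-k)!} \;=\; \frac{(-1)^{k+1}(-j+1)_{k-1}}{(j-1)!},
\]
and then factor out $\mu_j^2\,\Gamma(\nu+j+1)/(j-1)!$ to obtain the formula stated in the proposition.

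There is no substantial obstacle: the proof reduces to bookkeeping once one notices the parameter alignment at $\alpha=\nu$. The only small point to be careful about is the Pochhammer identity converting $1/(j-k)!$ into a form that makes the dependence on $j$ explicit, which is a one-line calculation.
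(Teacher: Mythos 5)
Your proposal is correct and follows essentially the same route as the paper: expand the $(i,j)$-entry of $W_\mu^{(\nu,\nu)}$ via the $LDU$-decomposition, apply the scalar Laguerre orthogonality relation (which is applicable precisely because $\alpha=\nu$ aligns the parameters), and rewrite $1/(j-k)!$ as a Pochhammer symbol. The only cosmetic difference is that you correctly truncate the sum at $k=j$ where the paper writes the upper limit as $N$.
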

\begin{proof}
It follows from the definition of the weight \eqref{eq:weight_factorization} and the orthogonality relations for the Laguerre polynomials that
\begin{align*}
(H_0^{(\nu, \nu)})_{i,j}=\int_0^\infty (W_\mu^{(\nu, \nu)}(x))_{i,j}\, dx &= \sum_{k=1}^{\mathrm{min}(i,j)} \, \frac{\mu_i\mu_j}{\mu_k^2} \delta^{(\nu)}_k \int_0^\infty L^{(\nu+k)}_{i-k}(x) L^{(\nu+k)}_{j-k}(x) x^{\nu+k}e^{-x} \, dx\\
&= \delta_{i,j} \mu_j^2 \Gamma(\nu+j+1) \sum_{k=1}^N \frac{ \delta^{(\nu)}_k}{\mu_k^2 (j-k)!}.
\end{align*}
Now the proposition follows by rewriting the factor $(j-k)!$. 
\end{proof}

\section{A symmetric second order differential operator}\label{sec:symmetricDE}

A standard technique in order to deal with matrix valued polynomials is to obtain a 
matrix valued differential operator having the matrix valued orthogonal polynomials
as eigenfunctions, see e.g. \cite{Dura-CA}, \cite{DuraG}, \cite{GrundlIMF}, \cite{IsmaKR-MV},
\cite{Koe:Rio:Rom}. 
We obtain a second-order matrix valued differential operator which is symmetric with 
respect to $W^{(\nu)}_\mu$ and which preserves polynomials and its degree, by establishing a conjugation to 
a diagonal matrix differential operator using the approach of \cite{Koe:Rio:Rom}.

Let $F_2$, $F_1$, $F_0$ be matrix valued polynomials of degrees two, one and zero respectively. 
Assume that we have a matrix valued second-order differential operator $D$ which acts on a 
matrix valued $C^{2}([0,\infty))$-function $Q$ by
\begin{equation}
\label{eq:form-differential-operator-general}
QD=\left(\frac{d^2Q}{dx^2}\right)(x) \, F_2(x) + \left(\frac{dQ}{dx}\right)(x) \, F_1(x) +Q(x) F_0(x).
\end{equation}
For a positive definite matrix valued weight $W$ with finite moments of all orders, we say that $D$ is symmetric with respect to $W$ if for all matrix valued $C^{2}([0,\infty))$-functions $G,H$ we have
$$\int_0^\infty (GD)(x)W(x)(H(x))^\ast\, dx = \int_0^\infty G(x)W(x)((HD)(x))^\ast dx.$$
By \cite[Thm~3.1]{DuraG},  $D$ is symmetric with respect to $W$ if and only if
the boundary conditions 
\begin{gather}
\label{eq:symmetry-boundary1}
\lim_{x\to a} F_2(x)W(x) = 0 = \lim_{x\to b} F_2(x)W(x), \\ 
\label{eq:symmetry-boundary2}
\lim_{x\to a} F_1(x)W(x) - \frac{d(F_2W)}{dx}(x) = 0 = 
\lim_{x\to b} F_1(x)W(x) - \frac{d(F_2W)}{dx}(x)
\end{gather}
and the symmetry conditions 
\begin{gather}
\label{eq:symmetry-conditions}
F_2(x)W(x) = W(x) \bigl( F_2(x)\bigr)^\ast, \qquad 
2 \frac{d(F_2W)}{dx}(x) - F_1(x)W(x) = W(x) \bigl( F_1(x)\bigr)^\ast, \\
\label{eq:symmetry-conditions2}
\frac{d^2(F_2W)}{dx^2}(x) - \frac{d(F_1W)}{dx}(x) + F_0(x) W(x) = W(x) \bigl( F_0(x)\bigr)^\ast
\end{gather}
for almost all $x\in (a,b)$  hold.

\begin{rmk}
\label{rmk:Dtilde}
Suppose that the differential operator $D$ is symmetric with respect to a weight matrix of the form $W^{(\nu)}_\mu(x)=L(x)\,T(x)\,L(x)^\ast$, and let $\widetilde{D} = \frac{d^2}{dx^2} \widetilde{F}_2 + \frac{d}{dx} \widetilde{F}_1 + \widetilde{F}_0$
be the second-order differential operator obtained by conjugation of $D$ by $L$. Then for all $C^2$-matrix valued functions $Q$ we have
\begin{equation}\label{eq:conjugatedD}
\frac{d^2(QL)}{dx^2}(x) \widetilde{F}_2(x) + \frac{d(QL)}{dx}(x) \widetilde{F}_1(x) + 
(QL)(x)\widetilde{F}_0(x) = 
\bigl( QD\bigr)(x) L(x),
\end{equation}
where the coefficients $F_{i}$ and $\widetilde F_{i}$ are related by
\begin{equation}\label{eq:relationsFtilde}
F_2L=L\widetilde{F}_2, \quad
F_1L= 2\frac{dL}{dx} \widetilde{F}_2 + L\widetilde{F}_1, \quad
F_0L= \frac{d^2L}{dx^2} \widetilde{F}_2 + \frac{dL}{dx} \widetilde{F}_1 + L\widetilde{F}_0,
\end{equation}
see the discussion in \cite[\S 4]{Koe:Rio:Rom}. Moreover, it follows from \cite[Proposition 4.2]{Koe:Rio:Rom} that $D$ is symmetric with respect to $W$ if and only if $\widetilde D$ is symmetric with respect to $T$.
\end{rmk}

In order to obtain the explicit expression for a symmetric differential operator having the matrix valued orthogonal polynomials $P_n^{(\nu)}$ as eigenfunctions, we need to control commutation
relations with some explicit matrices. In particular, $J$ is the diagonal matrix $J_{k,k}=k$. 
Note that $JS_\mu=S_\mu J$. 

\begin{lem}
\label{lem:commutation_reltatios}
The following commutation relations for $L^{(\alpha)}_{\mu}$, $A_{\mu}$ and $J$ hold.
\begin{align*}
L^{(\alpha)}_{\mu}(x)J(L^{(\alpha)}_{\mu}(x))^{-1}&=x(A_{\mu}+1)^{-1}-x+(\alpha+J)A_{\mu}+J,\\
(L^{(\alpha)}_{\mu}(x))^{-1}JL^{(\alpha)}_{\mu}(x)&=J-(\alpha+J-x)A_{\mu}-xA_{\mu}^2,\\
L^{(\alpha)}_{\mu}(x)(1-A_{\mu})(L^{(\alpha)}_{\mu}(x))^{-1}&=(1+A_{\mu})^{-1}.
\end{align*}
\end{lem}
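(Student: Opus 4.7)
My plan is to reduce to the case $\mu_i=1$ by conjugating with $S_\mu$, which commutes with $J$ and intertwines $L^{(\alpha)}(x),A$ with $L^{(\alpha)}_\mu(x),A_\mu$ by \eqref{eq:matrix-L-Laguerre}--\eqref{eq:derivativeL}. Then I exploit the factorization $L^{(\alpha)}(x) = L^{(\alpha)}(0)\, e^{xA}$ from \eqref{eq:derivativeL} to separate the $x$-dependence: for any matrix $X$,
\[
L^{(\alpha)}(x)\, X\, L^{(\alpha)}(x)^{-1} = L^{(\alpha)}(0)\, \bigl(e^{xA} X e^{-xA}\bigr)\, L^{(\alpha)}(0)^{-1},
\]
with the analogous formula $L^{(\alpha)}(x)^{-1} X L^{(\alpha)}(x) = e^{-xA}\bigl(L^{(\alpha)}(0)^{-1} X L^{(\alpha)}(0)\bigr)e^{xA}$ for the opposite ordering.

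The key computation is $[A,J] = -A$, immediate from $A = -\sum_{k\geq 2}E_{k,k-1}$ and $J_{k,k}=k$. Since $\operatorname{ad}(A)^2 J = [A,-A] = 0$, the Baker--Campbell--Hausdorff series truncates and yields $e^{xA} J e^{-xA} = J - xA$ and $e^{-xA} J e^{xA} = J + xA$, while trivially $e^{xA}(1-A)e^{-xA} = 1-A$. Substituting, (iii) reduces directly to the $x=0$ statement $L^{(\alpha)}(0)(1-A)L^{(\alpha)}(0)^{-1} = (1+A)^{-1}$. For (i), the right-hand side of the displayed formula becomes $L^{(\alpha)}(0) J L^{(\alpha)}(0)^{-1} - x\,L^{(\alpha)}(0) A L^{(\alpha)}(0)^{-1}$, and the second factor equals $1-(1+A)^{-1} = A(1+A)^{-1}$ once (iii) at $x=0$ is known; so (i) collapses to $L^{(\alpha)}(0) J L^{(\alpha)}(0)^{-1} = (\alpha+J)A+J$. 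For (ii), expanding $e^{-xA}(J-(\alpha+J)A)e^{xA}$ with the commutations above (using that $A$ and scalars pass through $e^{\pm xA}$) produces exactly $J - (\alpha+J-x)A - xA^2$, so (ii) reduces to $L^{(\alpha)}(0)^{-1} J L^{(\alpha)}(0) = J - (\alpha+J)A$.

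The remaining $x=0$ identities are verified entrywise using $L^{(\alpha)}(0)_{m,n} = \binom{m+\alpha}{m-n}$ for $m\geq n$ together with $(L^{(\alpha)}(0)^{-1})_{m,n} = (-1)^{m-n}\binom{m+\alpha}{m-n}$, the latter from the preceding corollary after simplifying $\binom{-\alpha-n-1}{m-n}$ via the standard negative-index identity. Identity (iii) then follows from Pascal's rule combined with the hockey-stick summation $\sum_{k=n}^{m}\binom{k+\alpha}{k-n} = \binom{m+\alpha+1}{m-n}$, while (i) and (ii) both collapse to the single factorial identity $(m-n)\binom{m+\alpha}{m-n}=(m+\alpha)\binom{m+\alpha-1}{m-n-1} = (\alpha+n+1)\binom{m+\alpha}{m-n-1}$. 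The main obstacle is organizational: one must track boundary contributions at $n=1$ and $n=N$, where the nilpotent shift $A$ kills terms, and arrange the bookkeeping so that all three identities drop out uniformly from the single factorization $L^{(\alpha)}(x) = L^{(\alpha)}(0)e^{xA}$ rather than via three unrelated ad hoc computations.
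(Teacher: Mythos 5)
Your argument is correct, but it follows a genuinely different route from the paper's. The paper proves the third identity by reducing it to $(A+1)L^{(\alpha)}(x)A=AL^{(\alpha)}(x)$ and checking the entries via a contiguous relation for Laguerre polynomials at general $x$ (DLMF 18.9.14), proves the first identity entrywise from the three-term recurrence relation of the Laguerre polynomials, and then obtains the second identity from the other two by pure matrix algebra. You instead exploit the factorization $L^{(\alpha)}(x)=L^{(\alpha)}(0)e^{xA}$ of \eqref{eq:derivativeL} together with $[A,J]=-A$ and the truncated adjoint-action expansion $e^{\pm xA}Je^{\mp xA}=J\mp xA$ to strip out all $x$-dependence, reducing all three statements to constant identities at $x=0$, namely $L^{(\alpha)}(0)(1-A)L^{(\alpha)}(0)^{-1}=(1+A)^{-1}$, $L^{(\alpha)}(0)JL^{(\alpha)}(0)^{-1}=(\alpha+J)A+J$ and $L^{(\alpha)}(0)^{-1}JL^{(\alpha)}(0)=J-(\alpha+J)A$; these become the binomial identities you cite (Pascal, the hockey-stick sum $\sum_{k=n}^m\binom{k+\alpha}{k-n}=\binom{m+\alpha+1}{m-n}$, and $(m-n)\binom{m+\alpha}{m-n}=(m+\alpha)\binom{m+\alpha-1}{m-n-1}=(\alpha+n+1)\binom{m+\alpha}{m-n-1}$), all of which I have checked. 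Your reductions are sound: the $x$-linear part of (i) indeed comes from $L^{(\alpha)}(0)AL^{(\alpha)}(0)^{-1}=1-(1+A)^{-1}$, and the expansion $e^{-xA}(J-(\alpha+J)A)e^{xA}=J-(\alpha+J-x)A-xA^2$ is exactly right. The trade-off: the paper's proof needs contiguous and recurrence relations for Laguerre polynomials at general argument but avoids the explicit value of $L^{(\alpha)}(0)^{-1}$, whereas yours needs only the values at $0$ (binomial coefficients) plus the exponential factorization already established in \eqref{eq:derivativeL}, and treats the three identities uniformly; it is also consonant with the adjoint-action identities the paper itself uses later in \eqref{eq:conjAHofJ}. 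Both are complete proofs.
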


Note that in particular, the last equality gives $[A_\mu, L^{(\al)}_\mu(x)] =A_\mu L^{(\al)}_\mu(x) A_\mu$.

\begin{proof}
It suffices to prove the lemma for $\mu_i=1$ for all $i$. 
We first prove the last identity. If we multiply the third equation by $L^{(\alpha)}(x)$ 
on the right and by $A+1$ on the left, we see that it suffices to show 
$(A+1)L^{(\alpha)}(x)A=AL^{(\alpha)}(x)$. 
The $(r,s)$-entry of this equation is given by
$$
L^{(\alpha+s+1)}_{r-s-2}(x)-L^{(\alpha+s+1)}_{r-s-1}(x)
=-L^{(\alpha+s)}_{r-s-1}(x),
$$
which is e.g. \cite[18.9.14]{DLMF}. 

Similarly, the first equation 
is equivalent to 
$$
(A+1)L^{(\alpha)}(x)J=((\alpha+J-x)A+AJ+J+\al A^2+AJA)L^{(\alpha)}(x).
$$
The $(r,s)$-entry of this equation, after simplifying and regrouping terms, gives
$$
-sL^{(\alpha+s)}_{r-s-1}(x)+L^{(\alpha+s)}_{r-s}(x)=
rL^{(\alpha+s)}_{r-s}(x)+(x-\alpha-2r+1)L^{(\alpha+s)}_{r-s-1}(x)+(\alpha+r-1)L^{(\alpha+s)}_{r-s-2}(x).
$$
Observe that all the Laguerre polynomials have the same parameter and this 
follows from the three-term recurrence relation of the Laguerre polynomials, see e.g. \cite[(1.11.3)]{KoekS}. 

The second equation follows from the other two. Use the third equation in the first one twice to 
rewrite the first equation as
$$
L^{(\al)}(x)J(L^{(\al)}(x))^{-1} = 
- x L^{(\al)}(x)A(L^{(\al)}(x))^{-1} +\al A + J L^{(\al)}(x)(1-A)^{-1}(L^{(\al)}(x))^{-1}
$$
and isolating $J$ from the last term gives 
$$
J = L^{(\al)}(x)\bigl( J(1-A)+xA(1-A) -\al (L^{(\al)}(x))^{-1} A L^{(\al)}(x) (1-A)
\bigr)(L^{(\al)}(x))^{-1}
$$
and use the last equation once more to see $(L^{(\al)}(x))^{-1} A L^{(\al)}(x)(1-A)=A$. 
Rewriting gives the second equation. 
\end{proof}

Now we are ready to write explicitly a symmetric second order differential operator.

\begin{prop}
\label{prop:symmetry_D}
Let $D^{(\alpha, \nu)}$ be given by
$$D^{(\alpha, \nu)}= \left(\frac{d^2}{dx^2}\right) \, F^{(\alpha, \nu)}_{2}(x) +\left(\frac{d}{dx}\right) \, F^{(\alpha, \nu)}_{1}(x) +F^{(\alpha, \nu)}_{0}(x),$$
with $F^{(\alpha, \nu)}_{2}(x)=x$ and
$$F_{1}^{(\alpha, \nu)}(x)=-x(A_\mu+1)^{-1}+\nu+J+1+(\alpha+J)A_\mu, \qquad F^{(\alpha, \nu)}_{0}(x)=(\alpha-\nu)(A_{\mu}+1)^{-1}-J.$$
Then $D^{(\alpha, \nu)}$ is symmetric with respect to $W^{(\alpha, \nu)}_{\mu}$. Moreover,
$$P_{n}^{(\alpha, \nu)}D^{(\alpha, \nu)}=\Gamma^{(\alpha, \nu)}_nP_{n}^{(\alpha, \nu)}, \qquad \Gamma^{(\alpha, \nu)}_n=(-n+\alpha-\nu)(A_{\mu}+1)^{-1}-J, \qquad n\in \N.$$
\end{prop}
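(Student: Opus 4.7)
The plan is to apply Remark \ref{rmk:Dtilde}. The central algebraic computation is to show that conjugating $D^{(\alpha,\nu)}$ by $L^{(\alpha)}_\mu$ produces a diagonal (and $\mu$-independent) second-order operator $\widetilde D$ with
\[
\widetilde F_2 = x, \qquad \widetilde F_1 = \nu+1+J-x, \qquad \widetilde F_0 = \alpha-\nu-J.
\]
This is checked via the relations \eqref{eq:relationsFtilde} with $L = L^{(\alpha)}_\mu$ (so $L' = LA_\mu$, $L'' = LA_\mu^2$). Clearly $\widetilde F_2 = L^{-1}xL = x$. The expressions $\widetilde F_1 = L^{-1}F_1^{(\alpha,\nu)}L - 2xA_\mu$ and $\widetilde F_0 = L^{-1}F_0^{(\alpha,\nu)}L - xA_\mu^2 - A_\mu \widetilde F_1$ are expanded using all three identities of Lemma \ref{lem:commutation_reltatios}: $L^{-1}JL = J - (\alpha+J-x)A_\mu - xA_\mu^2$, $L^{-1}(1+A_\mu)^{-1}L = 1-A_\mu$, and the consequence $L^{-1}A_\mu L = A_\mu(1-A_\mu)^{-1}$ obtained from $LA_\mu L^{-1} = 1-(1+A_\mu)^{-1} = A_\mu(1+A_\mu)^{-1}$. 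One also uses the commutation $JA_\mu = A_\mu(J+1)$. The $xA_\mu^2$-term arising from $L^{-1}F_0L$ cancels exactly, and the $A_\mu$-terms balance between $(J+\nu-x)A_\mu$ coming from $L^{-1}F_0 L$ and $A_\mu \widetilde F_1 = (\nu+J-x)A_\mu$, yielding the diagonal $\widetilde F_0$.

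Once $\widetilde D$ is identified, its $k$-th diagonal entry is the scalar operator $x\frac{d^2}{dx^2} + (\nu+k+1-x)\frac{d}{dx} + (\alpha-\nu-k)$, i.e.\ the classical Laguerre differential operator of parameter $\nu+k$ plus a constant multiplication. Symmetry of this scalar operator with respect to the scalar weight $\delta_k^{(\nu)}\, x^{\nu+k}e^{-x}$ on $(0,\infty)$ is standard, and the boundary terms \eqref{eq:symmetry-boundary1}, \eqref{eq:symmetry-boundary2} vanish at $0$ (from $\nu>0$) and at $\infty$ (from $e^{-x}$). Since $T^{(\nu)}$ and each of $\widetilde F_2, \widetilde F_1, \widetilde F_0$ are diagonal, the symmetry conditions \eqref{eq:symmetry-conditions}, \eqref{eq:symmetry-conditions2} reduce entrywise to the scalar case, so $\widetilde D$ is symmetric with respect to $T^{(\nu)}$, and Remark \ref{rmk:Dtilde} then yields symmetry of $D^{(\alpha,\nu)}$ with respect to $W^{(\alpha,\nu)}_\mu$.

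For the eigenvalue equation, the coefficients $F_2^{(\alpha,\nu)}, F_1^{(\alpha,\nu)}, F_0^{(\alpha,\nu)}$ have degrees $1, 1, 0$, so $D^{(\alpha,\nu)}$ preserves the degree of matrix polynomials. By symmetry, $\langle P_n^{(\alpha,\nu)}D^{(\alpha,\nu)}, P_m^{(\alpha,\nu)}\rangle = \langle P_n^{(\alpha,\nu)}, P_m^{(\alpha,\nu)}D^{(\alpha,\nu)}\rangle = 0$ for $m<n$, hence $P_n^{(\alpha,\nu)}D^{(\alpha,\nu)} = \Gamma_n^{(\alpha,\nu)}P_n^{(\alpha,\nu)}$ for a matrix $\Gamma_n^{(\alpha,\nu)}$ determined by the leading coefficient. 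Since $P_n^{(\alpha,\nu)}(x) = x^n I + \cdots$, matching the $x^n$-coefficient gives $\Gamma_n^{(\alpha,\nu)} = -n(A_\mu+1)^{-1} + F_0^{(\alpha,\nu)} = (-n+\alpha-\nu)(A_\mu+1)^{-1} - J$. The main obstacle is the algebraic simplification in the first step: the cancellations producing the clean diagonal $\widetilde F_0 = \alpha-\nu-J$ require careful bookkeeping of terms mixing $A_\mu$ with the non-commuting $J$, through all three identities of Lemma \ref{lem:commutation_reltatios} together with $JA_\mu = A_\mu(J+1)$.
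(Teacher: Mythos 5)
Your proposal is correct and follows essentially the same route as the paper: conjugate $D^{(\alpha,\nu)}$ by $L^{(\alpha)}_\mu$ via Remark \ref{rmk:Dtilde}, use the three identities of Lemma \ref{lem:commutation_reltatios} to reduce to the diagonal operator with $\widetilde F_1=\nu+J+1-x$ and $\widetilde F_0=\alpha-\nu-J$, verify symmetry with respect to $T^{(\nu)}$ entrywise, and read off $\Gamma_n^{(\alpha,\nu)}$ from the leading coefficient. Your explicit identification of the diagonal entries as scalar Laguerre operators of parameter $\nu+k$ is a slightly more detailed justification of the step the paper dismisses as ``a simple calculation,'' but it is the same argument.
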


Note that the eigenvalue matrix $\Ga^{(\al,\nu)}_n$ is a lower triangular matrix, which also depends 
on the choice of the sequence $\mu$. 

\begin{proof}
Let us consider the differential operator $\widetilde{D}^{(\alpha, \nu)} = \frac{d^2}{dx^2} \widetilde{F}^{(\alpha, \nu)}_2 + \frac{d}{dx} \widetilde{F}^{(\alpha, \nu)}_1 + \widetilde{F}^{(\alpha, \nu)}_0$ obtained by conjugation of $D^{(\alpha, \nu)}$ by the matrix $L^{(\alpha)}_{\mu}$.  Then it follows from \eqref{eq:relationsFtilde} that the coefficients $\widetilde{F}^{(\alpha, \nu)}_i$ are given by $\widetilde{F}^{(\alpha, \nu)}_2(x)=x$ and
\begin{equation}
\label{eq:coefficients-differential-Hermite-conjugated}
\begin{split}
\widetilde F^{(\alpha, \nu)}_{1}&=(L^{(\alpha)}_{\mu})^{-1} \left(F^{(\nu)}_1L^{(\alpha)}_{\mu} -2x\frac{dL^{(\alpha)}_{\mu}}{dx} \right), \\
\widetilde F^{(\alpha, \nu)}_0 &=(L^{(\alpha)}_{\mu})^{-1} \left(F^{(\alpha, \nu)}_0L^{(\alpha)}_{\mu} - x \frac{d^2L^{(\alpha)}_{\mu}}{dx^2} - \frac{dL^{(\alpha)}_{\mu}}{dx} \widetilde{F}^{(\alpha, \nu)}_1\right).
\end{split}
\end{equation}
The derivatives in \eqref{eq:coefficients-differential-Hermite-conjugated} can be 
evaluated by \eqref{eq:derivativeL}. 
It follows from the definition of $F_1^{(\al,\nu)}$ that
\begin{multline*}
\widetilde F^{(\alpha, \nu)}_{1}(x)=\nu+1-x L^{(\alpha)}_{\mu}(x)^{-1}(A_\mu+1)^{-1}L^{(\alpha)}_{\mu}(x)+L^{(\alpha)}_{\mu}(x)^{-1}JL^{(\alpha)}_{\mu}(x)\\
+L^{(\alpha)}_{\mu}(x)^{-1}(\alpha+J)A_\mu L^{(\alpha)}_{\mu}(x)-2xA_\mu.
\end{multline*}
We use the first equation of Lemma \ref{lem:commutation_reltatios} to rewrite 
the term $L^{(\alpha)}_{\mu}(x)^{-1}(\alpha+J)A_\mu L^{(\alpha)}_{\mu}(x)$. 
Similarly the last equation of Lemma \ref{lem:commutation_reltatios} is used, and 
we obtain $\widetilde F^{(\al,\nu)}_{1}(x)=\nu+J+1-x$.

Similarly, using \eqref{eq:derivativeL} and  Lemma 
\ref{lem:commutation_reltatios} and the expression for $\widetilde F^{(\al,\nu)}_{1}$ we obtain
\begin{multline*}
\widetilde F^{(\al,\nu)}_{0}(x)=(\alpha-\nu)(1-A_{\mu})-L^{(\alpha)}_{\mu}(x)^{-1}J L^{(\alpha)}_{\mu}(x)
-xA^2_{\mu}-A_\mu(\nu+J+1-x)=(\alpha-\nu)-J.
\end{multline*}

By Remark \ref{rmk:Dtilde}, in order to prove that $D^{(\alpha, \nu)}$ is symmetric with respect to  $W^{(\alpha, \nu)}_{\mu}$ it is enough to prove that $\widetilde D^{(\alpha, \nu)}$ is symmetric with respect to $T^{(\nu)}$, i.e. we need to show that the boundary conditions \eqref{eq:symmetry-boundary1}, \eqref{eq:symmetry-boundary2} and the symmetry conditions \eqref{eq:symmetry-conditions}, \eqref{eq:symmetry-conditions2} hold true 
with $W$ replaced by $T^{(\nu)}$ and the $F$'s replaced by the 
corresponding $\widetilde F^{(\al,\nu)}$'s. 
Since $\widetilde F^{(\al,\nu)}$'s are polynomials, the weight involves the exponential $e^{-x}$ we see that 
for $\nu>-1$, so in particular for $\nu>0$, the boundary conditions 
\eqref{eq:symmetry-boundary1}, \eqref{eq:symmetry-boundary2} are satisfied. 

The symmetry equation \eqref{eq:symmetry-conditions} and \eqref{eq:symmetry-conditions2} are 
diagonal conditions, and can be verified by a simple calculation.

Hence $D^{(\alpha, \nu)}$ is symmetric with respect to the weigh matrix $W^{(\al,\nu)}_\mu$. 
Since $D^{(\alpha, \nu)}$ preserves polynomials and the degree of the polynomials, $P_{n}^{(\alpha, \nu)}D^{(\alpha, \nu)}$ are 
also orthogonal with respect to $W_\mu^{(\alpha, \nu)}$, so that $P_n^{(\alpha, \nu)}D^{(\alpha, \nu)} = \Gamma^{(\alpha, \nu)}_n P_{n}^{(\alpha, \nu)}$ for 
some matrix $\Gamma^{(\alpha, \nu)}_n$, which is obtained by considering the leading coefficient. 
\end{proof}

\begin{rmk}
It follows from the proof Proposition \eqref{prop:symmetry_D}, that matrix valued polynomials $P_{n}^{(\alpha, \nu)}L^{(\alpha)}_{\mu}$ are polynomial eigenfunctions of the diagonal second-order differential operator $\widetilde D^{(\alpha, \nu)}$ with eigenvalue $\Gamma^{(\alpha, \nu)}_n=-n(A_\mu+1)^{-1}+(\alpha-\nu)(A_{\mu}+1)^{-1}-J$. More precisely, we have
$$ x \frac{d^2(P_{n}^{(\alpha, \nu)}L^{(\alpha)}_{\mu})}{dx^2}(x)  + \frac{d(P_{n}^{(\alpha, \nu)}L^{(\alpha)}_{\mu})}{dx}(x) (\nu+J+1-x) - 
(P_{n}^{(\alpha, \nu)}L^{(\alpha)}_{\mu})(x) J= \Gamma^{(\alpha, \nu)}_n (P_{n}^{(\alpha, \nu)}L^{(\alpha)}_{\mu})(x).$$
Observe that although the differential operator $\widetilde D^{(\alpha, \nu)}$ is diagonal, the eigenvalue $\Gamma^{(\alpha, \nu)}_n$ is a full lower triangular matrix so that the previous equation gives a coupled system of differential equations for the entries of $P_{n}^{(\alpha, \nu)}L_{\mu}^{(\alpha)}$. 
This is in contrast with the  case of matrix valued Gegenbauer polynomials studied in 
\cite{Koe:Rio:Rom}.  An analogous result for a differential operator and involving a diagonal eigenvalue
allows to determine the entries of the analogue of $P_{n}^{(\alpha, \nu)}L_{\mu}^{(\alpha)}$ 
as a single 
Gegenbauer polynomial. This allowed to find explicit expressions for the polynomials, see \cite[\S 5.2]{Koe:Rio:Rom}. The situation of the Gegenbauer setting of \cite{Koe:Rio:Rom} is repeated for the 
matrix valued Hermite polynomials \cite[\S 3]{IsmaKR-MV}. 
\end{rmk}

\section{The matrix valued Pearson equation}\label{sec:Pearson}

In order to establish the existence of shift operators, the Pearson equations are essential. 
We derive the matrix valued Pearson equations for the family of weights $W_{\mu}^{(\alpha, \nu)}$ 
under explicit non-linear conditions relating the coefficients of the sequence $\mu$ and 
the coefficients in $T^{(\nu)}$. 

First we need certain relations involving the function $e^{xA_\mu}$. Recall the diagonal matrix $J$; $J_{k,l}=\de_{k,l}k$. 
Then 
$[J,A_\mu]=A_\mu$ and $[J,A^\ast_\mu]=-A^\ast_\mu$, so that
\begin{equation}
\label{eq:conjAHofJ}
e^{-xA_\mu} J e^{xA_\mu}= xA_\mu+J, \qquad e^{-xA^\ast_\mu} J e^{xA^\ast_\mu}= -xA^\ast_{\mu}+J,
\end{equation}
For this we use that the left hand side of the first expression is a matrix valued polynomial in $x$, 
since $A_\mu$ is nilpotent. Its derivative $e^{-xA_\mu} [J,A_\mu] e^{xA_\mu}=A_\mu$ is constant, and the first formula follows. For the second equation of \eqref{eq:conjAHofJ} we take adjoints and replace $x$ by $-x$ in the first formula.

Now we need to impose conditions on the sequence $\{\mu_i\}_i$ and the coefficients $\delta_{k}^{(\nu)}$. We consider the diagonal matrix $\Delta^{(\nu)} = \mathrm{diag}(\delta_{1}^{(\nu)}, \ldots, \delta_{N}^{(\nu)})$, so that $(T^{(\nu)})_{k,k}=e^{-x} x^{\nu+k} (\Delta^{(\nu)})_{k,k}$. We assume that there exist coefficients $c^{(\nu)}$ and $d^{(\nu)}$ such that $\delta^{(\nu+1)}_{k}=(k\, d^{(\nu)} +c^{(\nu)}) \, \delta^{(\nu)}_{k}$ for all $k=1,\ldots,N$. In other words, we assume that
\begin{equation}
\label{eq:condition_delta_Phi}
\Delta^{(\nu+1)}=(d^{(\nu)} J+c^{(\nu)}) \, \Delta^{(\nu)}.
\end{equation}
Note that $d^{(\nu)}, c^{(\nu)}\geq 0$, since $\delta^{(\nu)}_{k}>0$. In view of \eqref{eq:conjAHofJ}, the condition \eqref{eq:condition_delta_Phi} implies that
\begin{equation}
\label{eq:exponentials_Deltas}
e^{-xA_\mu^\ast} (\Delta^{(\nu)})^{-1} \Delta^{(\nu+1)} e^{xA_\mu^\ast} =e^{-xA_\mu^\ast} (d^{(\nu)} J+c^{(\nu)}) e^{xA_\mu^\ast} = d^{(\nu)}(-x A_\mu^\ast + J)+c^{(\nu)}.
\end{equation}
This is the main ingredient in Proposition \ref{prop:Phinu}.

\begin{prop}
\label{prop:Phinu}
Let $W_\mu^{(\alpha, \nu)}$ be the weight matrix given in \eqref{eq:weight_factorization} and assume that \eqref{eq:condition_delta_Phi} holds true. Then 
$$\Phi^{(\alpha, \nu)}(x)=(W_\mu^{(\alpha, \nu)}(x))^{-1}W_\mu^{(\alpha, \nu+1)}(x),$$
is a matrix valued polynomial of degree two.
\end{prop}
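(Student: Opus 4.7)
The plan is to exploit the $LDU$-structure of $W_\mu^{(\alpha,\nu)}$ together with the exponential factorization $L_\mu^{(\alpha)}(x) = L_\mu^{(\alpha)}(0) e^{xA_\mu}$ to reduce everything to an elementary conjugation of a diagonal matrix by $e^{xA_\mu^\ast}$, which is controlled by \eqref{eq:conjAHofJ}/\eqref{eq:exponentials_Deltas}.

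First I would observe that, since $T^{(\nu)}$ is diagonal, it can be written as $T^{(\nu)}(x) = e^{-x} x^\nu x^J \Delta^{(\nu)}$, where $x^J$ is the diagonal matrix with entries $x^k$. Because all factors on the right are diagonal they commute, and a direct computation gives
\[
T^{(\nu)}(x)^{-1} T^{(\nu+1)}(x) = x\, (\Delta^{(\nu)})^{-1} \Delta^{(\nu+1)},
\]
a linear polynomial in $x$. The exponentials $e^{-x}$ and the powers $x^\nu$, $x^J$ cancel completely, which is really the point of choosing $T^{(\nu)}$ of this form.

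Next, substituting the factorization $W_\mu^{(\alpha,\nu)}(x) = L_\mu^{(\alpha)}(x)\, T^{(\nu)}(x)\, L_\mu^{(\alpha)}(x)^\ast$ into the definition of $\Phi^{(\alpha,\nu)}$ and cancelling the middle $L_\mu^{(\alpha)}(x)$-factors yields
\[
\Phi^{(\alpha,\nu)}(x) = L_\mu^{(\alpha)}(x)^{-\ast}\,\bigl(T^{(\nu)}(x)^{-1} T^{(\nu+1)}(x)\bigr)\, L_\mu^{(\alpha)}(x)^{\ast} = x\, L_\mu^{(\alpha)}(x)^{-\ast}\,(\Delta^{(\nu)})^{-1}\Delta^{(\nu+1)}\, L_\mu^{(\alpha)}(x)^{\ast}.
\]
Now I would apply \eqref{eq:derivativeL} in the form $L_\mu^{(\alpha)}(x)^\ast = e^{xA_\mu^\ast}\, L_\mu^{(\alpha)}(0)^\ast$ and its inverse to pull out the constant boundary matrices $L_\mu^{(\alpha)}(0)^\ast$ and $L_\mu^{(\alpha)}(0)^{-\ast}$, leaving the $x$-dependent core $e^{-xA_\mu^\ast}\,(\Delta^{(\nu)})^{-1}\Delta^{(\nu+1)}\, e^{xA_\mu^\ast}$.

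Finally, hypothesis \eqref{eq:condition_delta_Phi} gives $(\Delta^{(\nu)})^{-1}\Delta^{(\nu+1)} = d^{(\nu)} J + c^{(\nu)}$, and \eqref{eq:exponentials_Deltas} then collapses the conjugation by $e^{xA_\mu^\ast}$ to the polynomial expression $d^{(\nu)}(-x A_\mu^\ast + J) + c^{(\nu)}$. Putting everything together,
\[
\Phi^{(\alpha,\nu)}(x) = x\, L_\mu^{(\alpha)}(0)^{-\ast}\,\bigl( d^{(\nu)}(-x A_\mu^\ast + J) + c^{(\nu)} \bigr)\, L_\mu^{(\alpha)}(0)^{\ast},
\]
which is manifestly a matrix-valued polynomial of degree two. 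The only subtle step is verifying that the algebra of the conjugation by $e^{xA_\mu^\ast}$ really terminates at degree one in $x$, but this is exactly the content of \eqref{eq:conjAHofJ}; once that is in hand, the argument is a sequence of cancellations and presents no genuine obstacle.
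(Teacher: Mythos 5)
Your proof is correct and follows essentially the same route as the paper: cancel the middle $L_\mu^{(\alpha)}(x)$-factors in the $LDU$-decomposition, use $T^{(\nu)}(x)^{-1}T^{(\nu+1)}(x)=x(\Delta^{(\nu)})^{-1}\Delta^{(\nu+1)}$, pull out the constant factors $L_\mu^{(\alpha)}(0)^\ast$ via the exponential factorization, and apply \eqref{eq:condition_delta_Phi} together with \eqref{eq:exponentials_Deltas} to collapse the conjugation to a degree-one polynomial. Your final expression agrees with the one obtained in the paper's proof.
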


\begin{proof}
Using \eqref{eq:decompW_1}, the fact that 
$T^{(\nu)}(x)^{-1}T^{(\nu+1)}(x) = x(\Delta^{(\nu)})^{-1}\Delta^{(\nu+1)}$ and \eqref{eq:exponentials_Deltas}, we obtain
\begin{align*}
(W^{(\alpha, \nu)}_\mu(x))^{-1}W_\mu^{(\alpha, \nu+1)}(x) 
&= x  (L^{(\alpha)}_\mu(x)^\ast)^{-1} (\Delta^{(\nu)})^{-1} \Delta^{(\nu+1)} L^{(\alpha)}_\mu(x)^\ast \\
& = x(L_\mu^{(\alpha)}(0)^\ast)^{-1} e^{-xA_\mu^\ast} (d^{(\nu)} J+c^{(\nu)}) e^{xA_\mu^\ast} L_\mu^{(\alpha)}(0)^\ast \\
&=-d^{(\nu)}x^2 (L^{(\alpha)}_\mu(0)^\ast)^{-1} A_\mu^\ast L^{(\alpha)}_\mu(0)^\ast + \\
&\qquad\qquad\qquad + x \left(d^{(\nu)} (L^{(\alpha)}_\mu(0)^\ast)^{-1} J L^{(\alpha)}_\mu(0)^\ast + c^{(\nu)}\right). \qedhere
\end{align*}
\end{proof}

Now we assume that the coefficients $\mu_k$ and $\delta^{(\nu)}_k$ satisfy the  relation
\begin{equation}
\label{eq:recursion-alphas}
\frac{\mu_{k+1}^2}{\mu_{k}^2}=d^{(\nu)}k(N-k) \frac{ \delta_{k+1}^{(\nu)} }{\delta_{k}^{(\nu+1)}}, \qquad k=1,\ldots,N-1.
\end{equation}
Note that, since the coefficients $\mu_k$ are independent of $\nu$, we require the right hand side of \eqref{eq:recursion-alphas} to be independent of $\nu$. 

\begin{prop}
\label{prop:Psinu}
Let $W^{(\alpha, \nu)}_\mu$ be the weight matrix given in \eqref{eq:weight_factorization} and assume that the conditions \eqref{eq:condition_delta_Phi} and \eqref{eq:recursion-alphas} hold true. Then
$$\Psi^{(\alpha, \nu)}(x)=(W_\mu^{(\alpha, \nu)}(x))^{-1}\frac{dW_\mu^{(\alpha, \nu+1)}}{dx}(x),$$
is a matrix valued polynomial of degree one.
\end{prop}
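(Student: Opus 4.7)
The strategy is to compute $\Psi^{(\alpha,\nu)}$ directly from the $LDU$-factorization, following the template of Proposition \ref{prop:Phinu}. Writing $W_\mu^{(\alpha,\nu)} = L_\mu^{(\alpha)} T^{(\nu)} (L_\mu^{(\alpha)})^\ast$ and applying the product rule together with $\tfrac{dL_\mu^{(\alpha)}}{dx}=L_\mu^{(\alpha)}A_\mu$ and $L_\mu^{(\alpha)}(x)=L_\mu^{(\alpha)}(0)e^{xA_\mu}$, one obtains
\[
\Psi^{(\alpha,\nu)}(x) = \bigl(L_\mu^{(\alpha)}(0)^\ast\bigr)^{-1}\, e^{-xA_\mu^\ast}\, K(x)\, e^{xA_\mu^\ast}\, L_\mu^{(\alpha)}(0)^\ast,
\]
where $K(x)=(T^{(\nu)})^{-1}\bigl[A_\mu T^{(\nu+1)} + \tfrac{dT^{(\nu+1)}}{dx} + T^{(\nu+1)}A_\mu^\ast\bigr]$. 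Since $L_\mu^{(\alpha)}(0)^\ast$ is constant, it is enough to show that the inner sandwich $e^{-xA_\mu^\ast}K(x)e^{xA_\mu^\ast}$ is polynomial of degree one.

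Abbreviate $\Pi^{(\nu)} := (\Delta^{(\nu)})^{-1}\Delta^{(\nu+1)} = d^{(\nu)}J+c^{(\nu)}$ by \eqref{eq:condition_delta_Phi}. A direct diagonal computation gives $(T^{(\nu)})^{-1}T^{(\nu+1)}=x\Pi^{(\nu)}$ and $(T^{(\nu)})^{-1}\tfrac{dT^{(\nu+1)}}{dx}=\Pi^{(\nu)}(\nu+J+1)-x\Pi^{(\nu)}$. The key role of \eqref{eq:recursion-alphas} is to force $B := (T^{(\nu)})^{-1}A_\mu T^{(\nu+1)}$ to be a \emph{constant} matrix: its only nonzero entries sit on the subdiagonal and, using \eqref{eq:recursion-alphas} with the shift $k\to k-1$, reduce to $B_{k,k-1}=-(\mu_{k-1}/\mu_k)\,d^{(\nu)}(k-1)(N-k+1)$. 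Combining the three pieces,
\[
K(x) = B + \Pi^{(\nu)}(\nu+J+1) + x\,\Pi^{(\nu)}(A_\mu^\ast - 1),
\]
which is polynomial of degree one in $x$.

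The delicate point is that the outer conjugation by $e^{\pm xA_\mu^\ast}$ could a priori raise the degree. Using \eqref{eq:conjAHofJ}, $e^{-xA_\mu^\ast}Je^{xA_\mu^\ast}=J-xA_\mu^\ast$ and hence $e^{-xA_\mu^\ast}\Pi^{(\nu)}e^{xA_\mu^\ast}=\Pi^{(\nu)}-xd^{(\nu)}A_\mu^\ast$. Expanding the conjugation of $\Pi^{(\nu)}(\nu+J+1)+x\Pi^{(\nu)}(A_\mu^\ast-1)$, the cross-terms $\pm xM^{(\nu)}A_\mu^\ast$ and $\pm x^2 d^{(\nu)}(A_\mu^\ast)^2$ cancel, leaving only a residual $+x^2 d^{(\nu)}A_\mu^\ast$ of order two. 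For $\Psi^{(\alpha,\nu)}$ to be linear in $x$, the $\mathrm{ad}_{A_\mu^\ast}$-expansion of $B$ must therefore supply exactly the compensating $-x^2 d^{(\nu)}A_\mu^\ast$. The crucial calculation is
\[
[A_\mu^\ast,B]_{i,i}=d^{(\nu)}\bigl[i(N-i)-(i-1)(N-i+1)\bigr]=d^{(\nu)}(N+1-2i),
\]
which uses only $(A_\mu^\ast)_{k-1,k}(A_\mu)_{k,k-1}=1$ together with the explicit form of $B$; hence $[A_\mu^\ast,B]=d^{(\nu)}(N+1-2J)$, and then $[A_\mu^\ast,[A_\mu^\ast,B]]=-2d^{(\nu)}[A_\mu^\ast,J]=-2d^{(\nu)}A_\mu^\ast$ (using $[J,A_\mu^\ast]=-A_\mu^\ast$), with $\mathrm{ad}_{A_\mu^\ast}^{3}(B)=0$ since $[A_\mu^\ast,A_\mu^\ast]=0$.

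Granted this iterated-commutator identity, $e^{-xA_\mu^\ast}Be^{xA_\mu^\ast}=B-xd^{(\nu)}(N+1-2J)-x^2 d^{(\nu)}A_\mu^\ast$, and the $x^2$ contributions cancel exactly. The inner sandwich collapses to a polynomial of degree one, and the outer multiplication by the constant $L_\mu^{(\alpha)}(0)^{\ast}$ and its inverse preserves the degree, proving the claim. The main obstacle is thus the quadratic-commutator identity above: this is precisely the algebraic content of \eqref{eq:recursion-alphas}, in the same way that the linear identity $e^{-xA_\mu^\ast}\Pi^{(\nu)}e^{xA_\mu^\ast}=\Pi^{(\nu)}-xd^{(\nu)}A_\mu^\ast$ is the algebraic content of \eqref{eq:condition_delta_Phi} in Proposition \ref{prop:Phinu}.
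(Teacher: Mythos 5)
Your proposal is correct and follows essentially the same route as the paper: the same reduction to $e^{-xA_\mu^\ast}(T^{(\nu)})^{-1}\bigl[A_\mu T^{(\nu+1)}+\tfrac{dT^{(\nu+1)}}{dx}+T^{(\nu+1)}A_\mu^\ast\bigr]e^{xA_\mu^\ast}$, the same use of \eqref{eq:conjAHofJ}, and the same key commutator $[A_\mu^\ast,B]=d^{(\nu)}(N+1-2J)$ extracted from \eqref{eq:recursion-alphas} (the paper integrates $\tfrac{d}{dx}\,e^{-xA_\mu^\ast}Be^{xA_\mu^\ast}$ instead of writing the $\mathrm{ad}$-expansion, which is the same computation). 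Two cosmetic points: $B$ is $x$-independent automatically (condition \eqref{eq:recursion-alphas} only pins down its entries, so "forces $B$ to be constant" is a slight misstatement), and your $M^{(\nu)}$ should read $\Pi^{(\nu)}$.
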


\begin{proof}
Using \eqref{eq:decompW_1} we obtain 
% \begin{multline}
\begin{equation*}
\begin{split}
L^{(\alpha)}_\mu(0)^\ast  \Psi^{(\alpha, \nu)}(x) (L^{(\alpha)}_\mu(0)^\ast)^{-1}  = &e^{-xA_\mu^\ast}  (T^{(\nu)}(x))^{-1} A_\mu T^{(\nu+1)}(x)  e^{xA_\mu^\ast} \\
+e^{-xA_\mu^\ast}  (T^{(\nu)})^{-1}  &\frac{dT^{(\nu+1)}}{dx}(x)  e^{xA_\mu^\ast}
 + e^{-xA_\mu^\ast}  (T^{(\nu)})^{-1}  T^{(\nu+1)}  e^{xA_\mu^\ast} A_\mu^\ast.
\end{split}
\end{equation*}
%  \end{multline}
It follows from  \eqref{eq:condition_delta_Phi} and \eqref{eq:conjAHofJ} that
\begin{equation}\label{eq:third-term}
\begin{split}
% \begin{align} 
e^{-xA_\mu^\ast}  T^{(\nu)}(x)^{-1}T^{(\nu+1)}(x)  e^{xA_\mu^\ast}  A_\mu^\ast &=  x e^{-xA_\mu^\ast}  (\Delta^{(\nu)})^{-1} \Delta^{(\nu+1)} e^{xA_\mu^\ast} A_\mu^\ast  \\
&= -x^2 d^{(\nu)} (A_\mu^\ast)^2 + x d^{(\nu)} JA_\mu^\ast + x c^{(\nu)}A_\mu^\ast,
% \end{align}
\end{split}
\end{equation}
and
\begin{equation}\label{eq:second-term}
\begin{split}
% \begin{multline} 
e^{-xA_\mu^\ast}  &(T^{(\nu)})^{-1}  \frac{dT^{(\nu+1)}}{dx}(x)  e^{xA_\mu^\ast} = x^2 d^{(\nu)} (A_\mu^\ast+(A_\mu^\ast)^2) \\
&- x((A_\mu^\ast+1)(d^{(\nu)}J+c^{(\nu)})+d^{(\nu)}(\nu+J+1)A_\mu^\ast)+(\nu+J+1)(d^{(\nu)}J+c^{(\nu)}).
% \end{multline}
\end{split}
\end{equation}
We observe that the term $x^2 d^{(\nu)} (A_\mu^\ast)^2$ of the right hand side of \eqref{eq:second-term} cancels with the term of degree two in \eqref{eq:third-term}.

Now we note that  $(T^{(\nu)}(x))^{-1} \, A_\mu \, T^{(\nu+1)}(x) =x  (\Delta^{(\nu)})^{-1} \, A_\mu \,  \Delta^{(\nu+1)}$. On the other hand, the matrix $[(\Delta^{(\nu)})^{-1}  \, A_\mu \,  \Delta^{(\nu+1)}, A_\mu^\ast]$ is a diagonal matrix whose $k$-th diagonal entry is given by
$$
[(\Delta^{(\nu)})^{-1}  \, A_\mu \, \Delta^{(\nu+1)}, A^\ast]_{k,k}=\frac{\mu_{k+1}^2 \delta_{k}^{(\nu+1)}}{\mu_{k}^2\delta^{(\nu)}_{k+1}} -\frac{\mu_k^2 \delta_{k-1}^{(\nu+1)}}{\mu_{k-1}^2\delta^{(\nu)}_k}.
$$
By \eqref{eq:recursion-alphas} we verify that
$[(\Delta^{(\nu)})^{-1} \, A_\mu \, \Delta^{(\nu+1)}, A_\mu^\ast] = 2d^{(\nu)} J-d^{(\nu)}(N+1)$.
This leads to 
\begin{align*}
& \frac{d}{dx} \left( e^{-xA_\mu^\ast}  (\Delta^{(\nu)})^{-1}  \, A_\mu \,  \Delta^{(\nu+1)} e^{xA_\mu^\ast} \right) = e^{-xA_\mu^\ast}  \left[ (\Delta^{(\nu)})^{-1}  \, A_\mu \,  \Delta^{(\nu+1)},A_\mu^\ast \right] e^{xA_\mu^\ast} \\
& \hspace{3cm} = e^{-xA_\mu^\ast} (2d^{(\nu)} J-d^{(\nu)}(N+1))e^{xA_\mu^\ast} \\
& \hspace{3cm} = -2xd^{(\nu)}A_\mu^\ast +2d^{(\nu)}J-d^{(\nu)}(N+1).
\end{align*}
Therefore we have that
\begin{multline}
\label{eq:first-term}
e^{-xA_\mu^\ast}  (\Delta^{(\nu)})^{-1}  \, A_\mu \, \Delta^{(\nu+1)} e^{xA_\mu^\ast} 
= -d^{(\nu)}x^2 A_\mu^\ast +xd^{(\nu)} (2J-N-1)
+(\Delta^{(\nu)})^{-1} \, A_\mu \, \Delta^{(\nu+1)} 
\end{multline}
Adding \eqref{eq:third-term}, \eqref{eq:second-term} and \eqref{eq:first-term} shows 
that $\Psi^{(\alpha, \nu)}$ is a polynomial of degree one.
\end{proof}

For future reference we state Corollary \ref{cor:Pearson-Equations} as
an immediate consequence of the proofs of Propositions \ref{prop:Phinu} and \ref{prop:Psinu}.

\begin{cor}
\label{cor:Pearson-Equations}
Assuming the conditions \eqref{eq:condition_delta_Phi} and \eqref{eq:recursion-alphas}, the matrix valued polynomials 
\begin{align*}
 L^{(\alpha)}_\mu(0)^\ast \Phi^{(\alpha, \nu)}(x) (L_\mu^{(\alpha)}(0)^\ast)^{-1}  &=-d^{(\nu)}x^2 A_\mu^\ast + x \left(d^{(\nu)} J + c^{(\nu)}\right),\\
 L_\mu^{(\alpha)}(0)^\ast \Psi^{(\alpha, \nu)}(x) (L_\mu^{(\alpha)}(0)^\ast)^{-1}  &= x  \left(d^{(\nu)}(J-A_\mu^\ast(J+\nu+1)-N-1)-c^{(\nu)} \right) \\
&\hspace{-0.3cm} + \left((\nu+J+1)(d^{(\nu)}J+c^{(\nu)}) + (\Delta^{(\nu)})^{-1} \, A _\mu \,  \Delta^{(\nu+1)}\right).
\end{align*}
satisfy the Pearson equations
$$\Phi^{(\alpha, \nu)}(x)=(W_\mu^{(\alpha, \nu)}(x))^{-1}W_\mu^{(\alpha, \nu+1)}(x),\qquad \Psi^{(\alpha, \nu)}(x)=(W_\mu^{(\alpha, \nu)}(x))^{-1}\frac{dW_\mu^{(\alpha, \nu+1)}}{dx}(x).$$
\end{cor}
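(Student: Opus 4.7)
The plan is to recognize that the corollary is a bookkeeping consolidation of the explicit polynomials that already appeared in the proofs of Propositions \ref{prop:Phinu} and \ref{prop:Psinu}. Nothing genuinely new is required; one simply reads off the coefficients after conjugation by $L_\mu^{(\alpha)}(0)^\ast$.

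For $\Phi^{(\alpha,\nu)}$, I would argue directly from the final display in the proof of Proposition \ref{prop:Phinu}, which computes
\[
\Phi^{(\alpha,\nu)}(x) = -d^{(\nu)}x^2 (L_\mu^{(\alpha)}(0)^\ast)^{-1} A_\mu^\ast L_\mu^{(\alpha)}(0)^\ast + x\bigl(d^{(\nu)} (L_\mu^{(\alpha)}(0)^\ast)^{-1} J L_\mu^{(\alpha)}(0)^\ast + c^{(\nu)}\bigr).
\]
Conjugating both sides on the left by $L_\mu^{(\alpha)}(0)^\ast$ and on the right by $(L_\mu^{(\alpha)}(0)^\ast)^{-1}$ yields the stated expression for $L^{(\alpha)}_\mu(0)^\ast \Phi^{(\alpha,\nu)}(x) (L_\mu^{(\alpha)}(0)^\ast)^{-1}$.

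For $\Psi^{(\alpha,\nu)}$, I would sum the three contributions \eqref{eq:first-term}, \eqref{eq:second-term} and \eqref{eq:third-term} already obtained in the proof of Proposition \ref{prop:Psinu}. The quadratic terms $-x^2 d^{(\nu)}(A_\mu^\ast)^2$, $+x^2 d^{(\nu)}(A_\mu^\ast + (A_\mu^\ast)^2)$ and $-d^{(\nu)}x^2 A_\mu^\ast$ cancel, confirming the degree-one conclusion of Proposition \ref{prop:Psinu}. For the constant term, the contributions $(\nu+J+1)(d^{(\nu)}J+c^{(\nu)})$ from \eqref{eq:second-term} and $(\Delta^{(\nu)})^{-1}A_\mu\Delta^{(\nu+1)}$ from \eqref{eq:first-term} combine to give the stated constant. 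For the linear term, the summands
\[
d^{(\nu)}JA_\mu^\ast + c^{(\nu)}A_\mu^\ast - (A_\mu^\ast+1)(d^{(\nu)}J + c^{(\nu)}) - d^{(\nu)}(\nu+J+1)A_\mu^\ast + d^{(\nu)}(2J - N - 1)
\]
must be reorganized. Here I would invoke the commutator $[J,A_\mu^\ast]=-A_\mu^\ast$ from \eqref{eq:conjAHofJ}, equivalently $A_\mu^\ast J = (J+1)A_\mu^\ast$, which lets me combine the three $A_\mu^\ast$-bearing pieces into a single $-d^{(\nu)}A_\mu^\ast(J+\nu+1)$; the remaining scalar $J$-terms collapse to $d^{(\nu)}(J-N-1)-c^{(\nu)}$, giving precisely the coefficient of $x$ asserted in the corollary.

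The only subtle point—and the single small obstacle in this argument—is keeping track of the non-commutativity of $J$ and $A_\mu^\ast$ when regrouping the linear terms. This is resolved by the identity $A_\mu^\ast J=(J+1)A_\mu^\ast$, which is already in hand from the paragraph preceding Proposition \ref{prop:Phinu}. Once applied, all the pieces fit together without further calculation, and the two Pearson equations themselves are just the definitions of $\Phi^{(\alpha,\nu)}$ and $\Psi^{(\alpha,\nu)}$ restated.
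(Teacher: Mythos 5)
Your proposal is correct and takes essentially the same route as the paper, which presents Corollary \ref{cor:Pearson-Equations} as an immediate consequence of the proofs of Propositions \ref{prop:Phinu} and \ref{prop:Psinu}; your argument is exactly that consolidation, conjugating the final display of the proof of Proposition \ref{prop:Phinu} and summing \eqref{eq:third-term}, \eqref{eq:second-term} and \eqref{eq:first-term}. Your explicit verification of the quadratic cancellation and the regrouping of the linear terms via $A_\mu^\ast J=(J+1)A_\mu^\ast$ (from $[J,A_\mu^\ast]=-A_\mu^\ast$) is the only computation actually needed, and it checks out.
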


\begin{rmk}
Upon replacing $\de^{(\nu+1)}_k=(kd^{(\nu)}+c^{(\nu)})\de^{(\nu)}_k$ in \eqref{eq:recursion-alphas}, 
we can iterate the resulting identity to obtain
\begin{equation}
\label{eq:iterated-recursion-alphas}
\frac{\delta_k^{(\nu)}}{\mu_k^2}=\frac{(1+\frac{c^{(\nu)}}{d^{(\nu)}})_{k-1}}{(k-1)!(N-k+1)_{k-1}}\, \frac{\delta^{(\nu)}_1}{\mu_1^2}.
\end{equation}
This relation can now be used to evaluate explicitly the the $0$-th moment $H_0^{(\nu, \nu)}$ given in Proposition \eqref{prop:norm-zero}. Indeed, 
\begin{align*}
(H_0^{(\nu, \nu)})_{j,j} &= \frac{\mu_j^2 \, \delta^{(\nu)}_1 \,  \Gamma(\nu+j+1)}{\mu_1^2\, (j-1)!} \,\sum_{k=1}^N \frac{(1+\frac{c^{(\nu)}}{d^{(\nu)}})_{k-1} (-j+1)_{k-1}}{(N-k+1)_{k-1}(k-1)!}\\
&=\frac{\mu_j^2 \,\delta^{(\nu)}_1 \, \Gamma(\nu+j+1)}{\mu_1^2\, (j-1)!} \, \rFs{2}{1}{1+\frac{c^{(\nu)}}{d^{(\nu)}}, -(j-1)}{-N+1}{1}
\\&
=\frac{\mu_j^2 \, \delta^{(\nu)}_1 \,\Gamma(\nu+j+1)(-N-\frac{c^{(\nu)}}{d^{(\nu)}})_{j-1}}{\mu_1^2\, (j-1)!(-N-1)_{j-1}},
\end{align*}
where the ${}_2F_1$ is summed by the Chu-Vandermonde Identity.
\end{rmk}	

\section{Shift Operators}\label{sec:shift}

In this section we use the Pearson equations to give explicit  lowering and rising operators for the polynomials $P_n^{(\alpha, \nu)}$. Next we exploit the existence of the shift operators 
to give an explicit Rodrigues formula, to calculate the squared norms as well as the coefficients
in the three-term recurrence relation. Moreover, we find another matrix valued differential operator
to which the matrix polynomials are eigenfunctions. For this explicit 
matrix valued differential operator it is possible to perform a Darboux transform, and we give an
explicit expression for the Darboux transformation. We end by obtaining a Burchnall type identity, 
see \cite{IsmaKR-MV}, for the matrix valued Laguerre polynomials, and by showing that there are
at least three families of solutions to the non-linear conditions \eqref{eq:condition_delta_Phi} and 
\eqref{eq:recursion-alphas}. In this section we assume that these conditions are satisfied, 
and hence that the Pearson equations of Corollary \ref{cor:Pearson-Equations} hold. 

For matrix valued functions $P$ and $Q$, we denote by
$$
\langle P,Q\rangle^{(\alpha, \nu)} =\int_0^\infty P(x)W^{(\alpha, \nu)}_\mu(x)Q(x)^\ast \, dx,
$$
whenever the integral converges. Moreover, we have
\begin{align*}
 &\langle \frac{dP}{dx},Q\rangle^{(\alpha, \nu+1)} = \int_0^\infty \frac{dP}{dx}(x) W^{(\al,\nu+1)}_\mu(x) \bigl( Q(x)\bigr)^\ast\, dx \\
= & -\int_{0}^\infty P(x)  W_\mu^{(\alpha, \nu)}(x)\Psi^{(\alpha, \nu)}(x) Q(x)^\ast \, dx  - 
\int_{0}^\infty P(x) W_\mu^{(\nu)}(x)\Phi^{(\alpha, \nu)}(x) 
\frac{dQ^\ast}{dx}(x)\, dx\\
= &\, - \langle P, QS^{(\alpha, \nu)}\rangle^{(\alpha, \nu)},
\end{align*}
where $S^{(\alpha, \nu)}$ is the first order matrix valued differential operator
\begin{equation}
\label{eq:operator_Snu}
(QS^{(\alpha, \nu)})(x)=\frac{dQ}{dx}(x)(\Phi^{(\alpha, \nu)}(x))^*+Q(x)(\Psi^{(\alpha, \nu)}(x))^\ast.
\end{equation}
Note that we have to assume that the decay at $0$ and at $\infty$ is sufficiently large, which is the case for 
e.g. polynomials $P$ and $Q$. 

In particular, if we set $P(x)=P_n^{(\alpha, \nu)}(x)$ and $Q(x)=x^k$, considering the degrees of $\Phi^{(\alpha, \nu)}$ and $\Psi^{(\alpha, \nu)}$ we obtain $\langle \frac{dP_n^{(\alpha, \nu)}}{dx},Q\rangle^{(\alpha, \nu+1)}=0$ for all $k\in \mathbb{N}$, $k<n$. Since the leading coefficient of $\frac{dP^{(\alpha, \nu)}_n}{dx}$ is non-singular, we conclude that $\{\frac{dP^{(\alpha, \nu)}_n}{dx}\}_n$ is a sequence of matrix valued orthogonal polynomials with respect to $W_\mu^{(\alpha, \nu+1)}$. 
Similarly, the sequence $\{P_n^{(\alpha, \nu+1)}S^{(\alpha, \nu)}\}_n$ is a sequence of matrix valued orthogonal polynomials with respect to $W_\mu^{(\alpha, \nu)}$.

\begin{prop}
\label{prop:shifts}
Assume that the conditions \eqref{eq:condition_delta_Phi} and \eqref{eq:recursion-alphas} are satisfied for all 
$\nu$ of the form $\nu_0+k$,  $k\in \mathbb{N}$, for some fixed $\nu_0$. 
The first order differential operator $S^{(\alpha, \nu)}$ given in \eqref{eq:operator_Snu} satisfies
$$
\langle \frac{dP}{dx}, Q\rangle^{(\alpha, \nu+1)} =  -\langle P, QS^{(\alpha, \nu)}\rangle^{(\alpha, \nu)},
$$
for matrix valued polynomials $P$ and $Q$. Moreover
\begin{equation*}
\label{eq:shifts}
\frac{dP^{(\alpha, \nu)}_{n}}{dx}(x)=n\, P_{n-1}^{(\alpha, \nu+1)}(x), \qquad (P^{(\alpha, \nu+1)}_{n-1}S^{(\alpha, \nu)})(x)=K^{(\alpha,\nu)}_nP_{n}^{(\alpha, \nu)}(x),
\end{equation*}
where the matrices $K^{(\alpha,\nu)}_{n}$ are invertible and are explicitly given by
\begin{equation*}
L_\mu^{(\alpha)}(0)^{-1} K^{(\alpha, \nu)}_{n}  L_\mu^{(\alpha)}(0) =d^{(\nu)}(J-(J+\nu+n)A_\mu-N-1)-c^{(\nu)} .
\end{equation*}
\end{prop}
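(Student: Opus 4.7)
The plan is to establish the integration-by-parts (adjoint) identity first, then deduce the two shift formulas by uniqueness of monic orthogonal polynomial sequences, and finally read off $K_n^{(\alpha,\nu)}$ from a leading-coefficient computation.

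For the adjoint identity, I would apply ordinary integration by parts to $\int_0^\infty \frac{dP}{dx}(x)W_\mu^{(\alpha,\nu+1)}(x)Q(x)^\ast\,dx$ and then use the Pearson equations of Corollary~\ref{cor:Pearson-Equations}, namely $W_\mu^{(\alpha,\nu+1)} = W_\mu^{(\alpha,\nu)}\Phi^{(\alpha,\nu)}$ and $\frac{d}{dx}W_\mu^{(\alpha,\nu+1)} = W_\mu^{(\alpha,\nu)}\Psi^{(\alpha,\nu)}$, to rewrite the integrand as $-P(x)W_\mu^{(\alpha,\nu)}(x)\bigl(\Psi^{(\alpha,\nu)}(x)Q(x)^\ast + \Phi^{(\alpha,\nu)}(x)\frac{dQ^\ast}{dx}(x)\bigr)$; the expression inside the parentheses is precisely $(QS^{(\alpha,\nu)})(x)^\ast$ by the definition \eqref{eq:operator_Snu}. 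The boundary terms vanish because the diagonal factor $T^{(\nu+1)}$ carries $e^{-x}x^{\nu+1+k}$ with $\nu+1+k>0$, which kills $W_\mu^{(\alpha,\nu+1)}(x)$ at $x=0$ and dominates the polynomial growth of $P$, $Q$ and $L_\mu^{(\alpha)}$ at infinity.

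The two shift identities then follow from uniqueness of monic orthogonal polynomial sequences. For the lowering identity, $\frac{dP_n^{(\alpha,\nu)}}{dx}$ has degree $n-1$ with leading coefficient $nI$, and for any matrix polynomial $Q$ of degree $\leq n-2$ the polynomial $QS^{(\alpha,\nu)}$ has degree $\leq n-1$ (since $\Phi^{(\alpha,\nu)}$ has degree two and $\Psi^{(\alpha,\nu)}$ has degree one); the adjoint identity together with the orthogonality of $P_n^{(\alpha,\nu)}$ gives $\langle \frac{dP_n^{(\alpha,\nu)}}{dx},Q\rangle^{(\alpha,\nu+1)} = -\langle P_n^{(\alpha,\nu)},QS^{(\alpha,\nu)}\rangle^{(\alpha,\nu)} = 0$, so by uniqueness $n^{-1}\frac{dP_n^{(\alpha,\nu)}}{dx} = P_{n-1}^{(\alpha,\nu+1)}$. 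The raising identity is handled symmetrically: $P_{n-1}^{(\alpha,\nu+1)}S^{(\alpha,\nu)}$ has degree $n$, and for any $R$ of degree $\leq n-1$ the adjoint identity reads $\langle R, P_{n-1}^{(\alpha,\nu+1)}S^{(\alpha,\nu)}\rangle^{(\alpha,\nu)} = -\langle \frac{dR}{dx},P_{n-1}^{(\alpha,\nu+1)}\rangle^{(\alpha,\nu+1)} = 0$, so $P_{n-1}^{(\alpha,\nu+1)}S^{(\alpha,\nu)} = K_n^{(\alpha,\nu)}P_n^{(\alpha,\nu)}$ with $K_n^{(\alpha,\nu)}$ equal to the leading coefficient of the left hand side.

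To compute $K_n^{(\alpha,\nu)}$ I would extract the coefficient of $x^n$ in $\frac{dP_{n-1}^{(\alpha,\nu+1)}}{dx}(x)\Phi^{(\alpha,\nu)}(x)^\ast + P_{n-1}^{(\alpha,\nu+1)}(x)\Psi^{(\alpha,\nu)}(x)^\ast$ using $P_{n-1}^{(\alpha,\nu+1)}(x) = x^{n-1}I + \cdots$ and the explicit adjoints of the polynomials of Corollary~\ref{cor:Pearson-Equations}. The $x^2$-part of $\Phi^{(\alpha,\nu)\ast}$ contributes $-(n-1)d^{(\nu)}L_\mu^{(\alpha)}(0)A_\mu L_\mu^{(\alpha)}(0)^{-1}$, while the $x$-part of $\Psi^{(\alpha,\nu)\ast}$ contributes $L_\mu^{(\alpha)}(0)\bigl[d^{(\nu)}(J-(J+\nu+1)A_\mu-N-1)-c^{(\nu)}\bigr]L_\mu^{(\alpha)}(0)^{-1}$, and adding these collapses to the formula in the statement. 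For invertibility, note that after conjugation $K_n^{(\alpha,\nu)}$ is lower triangular (as $A_\mu$ is strictly lower triangular) with diagonal entries $-\bigl((N+1-k)d^{(\nu)}+c^{(\nu)}\bigr)$ for $k=1,\dots,N$; positivity of all $\delta_k^{(\nu)}$ together with the identity $\delta_k^{(\nu+1)}=(kd^{(\nu)}+c^{(\nu)})\delta_k^{(\nu)}$ forces $d^{(\nu)},c^{(\nu)}\geq 0$ with $d^{(\nu)}+c^{(\nu)}>0$, so each diagonal entry is strictly negative. The main obstacle I anticipate is tracking the conjugation by $L_\mu^{(\alpha)}(0)$ when combining the two leading-order contributions, since neither $J$ nor $(J+\nu+1)A_\mu$ commutes with $A_\mu$.
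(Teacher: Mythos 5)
Your proposal is correct and follows essentially the same route as the paper: integration by parts combined with the Pearson equations of Corollary~\ref{cor:Pearson-Equations} to obtain the adjoint relation, degree counting plus orthogonality to identify $\frac{dP_n^{(\alpha,\nu)}}{dx}$ and $P_{n-1}^{(\alpha,\nu+1)}S^{(\alpha,\nu)}$ with the appropriate monic families, and the leading coefficient to read off $K_n^{(\alpha,\nu)}$, whose invertibility follows from the strictly negative diagonal of $L_\mu^{(\alpha)}(0)^{-1}K_n^{(\alpha,\nu)}L_\mu^{(\alpha)}(0)$. You actually carry out the leading-coefficient computation that the paper leaves implicit, and the conjugation issue you anticipate at the end is harmless: both contributions are of the form $L_\mu^{(\alpha)}(0)X L_\mu^{(\alpha)}(0)^{-1}$ with the same constant matrix, so they simply add inside the conjugation.
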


\begin{proof}
Taking into account the preceding discussion, we have that  $\frac{dP^{(\alpha,\nu)}_{n}}{dx}$ is a multiple of  $P_{n-1}^{(\alpha, \nu+1)}$ and $P^{(\alpha, \nu+1)}_{n-1}S^{(\alpha, \nu)})$ is a multiple of $P_{n}^{(\alpha, \nu)}(x)$, these multiples follow from the leading coefficients. Now we only need to show that $K_n^{(\alpha, \nu)}$ is invertible. Observe that $L_\mu^{(\alpha)}(0)^{-1} K^{(\alpha, \nu)}_{n}  L_\mu^{(\alpha)}(0)$ is a lower triangular matrix whose $j$-th diagonal entry is given by
$$
(L_\mu^{(\alpha)}(0)^{-1} K^{(\alpha, \nu)}_{n}  L_\mu^{(\alpha)}(0))_{j,j}=d^{(\nu)}(j-(N+1))-c^{(\nu)}.
$$
These entries are strictly negative, since $c^{(\nu)}$ and $d^{(\nu)}$ are positive.
So invertibility follows.
\end{proof}

We note that the matrix $\widetilde K^{(\alpha, \nu)}_{n}=  L_\mu^{(\alpha)}(0)^{-1} K^{(\alpha, \nu)}_{n}  L_\mu^{(\alpha)}(0)$ is actually a function of $\nu+n$ so that $\widetilde K^{(\alpha, \nu+j)}_{n-j}= \widetilde K^{(\alpha, \nu)}_{n}$ for all $j\leq n$. Now conjugating with $L_\mu^{(\alpha)}(0)^{-1}$ we obtain that 
\begin{equation}
\label{eq:K-commute}
 K^{(\alpha, \nu+j)}_{n-j}=K^{(\alpha, \nu)}_{n},\qquad j\leq n.
 \end{equation}

\begin{thm}
\label{thm:rodrigues}
Assume that the conditions of Proposition \ref{prop:shifts}. The polynomials $P_{n}^{(\alpha, \nu)}$ satisfy the following Rodrigues formula
$$P^{(\alpha, \nu)}_{n}(x)=G^{(\alpha, \nu)}_{n} \left( \frac{d^nW_\mu^{(\alpha, \nu+n)}}{dx^n}(x)\right) W_\mu^{(\alpha, \nu)}(x)^{-1},$$
where 
$G^{(\alpha, \nu)}_{n}=(K_{n}^{(\alpha, \nu)})^{-1}\cdots (K_{1}^{(\alpha, \nu+n-1)})^{-1} = (K_{n}^{(\alpha, \nu)})^{-n}$. 
 Moreover, the squared norm $H_{n}^{(\alpha, \nu)}$ is given by
\begin{align*}
H_{n}^{(\alpha, \nu)}&=(-1)^n \, n! \, (K_{n}^{(\alpha, \nu)})^{-n}  H_{0}^{(\alpha, \nu+n)}\\
&= (-1)^n \, n! \, M^{(\alpha,\nu+n)}_\mu (K_{n}^{(\nu+n, \nu)})^{-n}  H_{0}^{(\nu+n, \nu+n)} (M^{(\alpha,\nu+n)}_\mu)^\ast,
\end{align*}
where $M^{(\nu+n,\nu)}_\mu$ is the matrix given in \eqref{eq:weight-alpha-lambda}.
\end{thm}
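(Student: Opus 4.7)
The plan is to iterate the raising-operator identity from Proposition \ref{prop:shifts} to obtain the Rodrigues formula, then substitute it into the norm integral and integrate by parts $n$ times.

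First I would rewrite the raising operator $S^{(\alpha,\nu)}$ in compact product form. Taking adjoints in the Pearson equations from Corollary \ref{cor:Pearson-Equations}, and using that $W^{(\alpha,\nu)}$ is Hermitian (immediate from $W^{(\alpha,\nu)} = L_\mu^{(\alpha)} T^{(\nu)} (L_\mu^{(\alpha)})^\ast$ with $T^{(\nu)}$ diagonal and positive), one obtains $(\Phi^{(\alpha,\nu)})^\ast = W^{(\alpha,\nu+1)}(W^{(\alpha,\nu)})^{-1}$ and $(\Psi^{(\alpha,\nu)})^\ast = \tfrac{dW^{(\alpha,\nu+1)}}{dx}(W^{(\alpha,\nu)})^{-1}$. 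Substituting into the definition \eqref{eq:operator_Snu} of $S^{(\alpha,\nu)}$ and recognizing the product rule gives the clean identity
\[
(QS^{(\alpha,\nu)})(x) = \frac{d}{dx}\bigl(Q(x)W^{(\alpha,\nu+1)}(x)\bigr)\,(W^{(\alpha,\nu)}(x))^{-1}.
\]

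Applying this with $Q = P_{n-1}^{(\alpha,\nu+1)}$ inside the raising relation $P_{n-1}^{(\alpha,\nu+1)}S^{(\alpha,\nu)} = K_n^{(\alpha,\nu)}P_n^{(\alpha,\nu)}$ produces the one-step identity $P_n^{(\alpha,\nu)} W^{(\alpha,\nu)} = (K_n^{(\alpha,\nu)})^{-1}\tfrac{d}{dx}\bigl(P_{n-1}^{(\alpha,\nu+1)} W^{(\alpha,\nu+1)}\bigr)$. I would iterate this $n$ times, increasing $\nu$ by $1$ at each step (legitimate under the standing hypothesis of Proposition \ref{prop:shifts}) and pulling each constant $K$-factor outside the next derivative. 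Stopping at $P_0^{(\alpha,\nu+n)} = I$ produces the Rodrigues formula with $G_n^{(\alpha,\nu)} = (K_n^{(\alpha,\nu)})^{-1}\cdots (K_1^{(\alpha,\nu+n-1)})^{-1}$. The compact form $G_n^{(\alpha,\nu)} = (K_n^{(\alpha,\nu)})^{-n}$ is then immediate from \eqref{eq:K-commute}, which collapses every factor in this product to $(K_n^{(\alpha,\nu)})^{-1}$.

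For the squared norm, substituting the Rodrigues formula into the defining integral yields $H_n^{(\alpha,\nu)} = G_n^{(\alpha,\nu)}\int_0^\infty \tfrac{d^n W^{(\alpha,\nu+n)}}{dx^n}(x)\,P_n^{(\alpha,\nu)}(x)^\ast dx$, which I would integrate by parts $n$ times. The boundary terms vanish at both endpoints: at $x = 0$ because the entries of $\partial_x^j W^{(\alpha,\nu+n)}$ all contain factors $e^{-x}x^{\nu+n+k-j}$ with $k \geq 1$ and $j \leq n-1$, so the exponent is at least $\nu+2 > 0$; at $x=\infty$ by exponential decay. Since $P_n^{(\alpha,\nu)}$ is monic of degree $n$, we have $\partial_x^n P_n^{(\alpha,\nu)} = n!\,I$, giving the first claimed expression $H_n^{(\alpha,\nu)} = (-1)^n n! (K_n^{(\alpha,\nu)})^{-n} H_0^{(\alpha,\nu+n)}$.

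The second form of the norm follows from the key observation that the expression in Proposition \ref{prop:shifts} for $L_\mu^{(\alpha)}(0)^{-1} K_n^{(\alpha,\nu)} L_\mu^{(\alpha)}(0)$ is $\alpha$-independent. Combined with Lemma \ref{lem:relation-alpha-lambda} at $\lambda = \nu+n$, this yields $K_n^{(\alpha,\nu)} = M_\mu^{(\alpha,\nu+n)} K_n^{(\nu+n,\nu)} (M_\mu^{(\alpha,\nu+n)})^{-1}$, and hence $(K_n^{(\alpha,\nu)})^{-n} = M_\mu^{(\alpha,\nu+n)} (K_n^{(\nu+n,\nu)})^{-n} (M_\mu^{(\alpha,\nu+n)})^{-1}$. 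Substituting this together with \eqref{eq:norms-diff-parameters} for $H_0^{(\alpha,\nu+n)}$ into the first form, the inverse $M$-factor cancels with the $M$-factor on the left of $H_0^{(\nu+n,\nu+n)}$ and produces the claimed second expression. The only non-routine step in the entire argument is spotting the compact identity for $QS^{(\alpha,\nu)}$ in the first paragraph; everything else is bookkeeping together with the vanishing of boundary terms, which is automatic from $\nu > 0$.
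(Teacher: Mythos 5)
Your proof is correct. The Rodrigues-formula part is exactly the paper's argument: both hinge on rewriting $(QS^{(\alpha,\nu)})(x) = \frac{d(QW_\mu^{(\alpha,\nu+1)})}{dx}(x)\,(W_\mu^{(\alpha,\nu)}(x))^{-1}$ from Corollary \ref{cor:Pearson-Equations}, iterating, and terminating at $P_0=I$, with \eqref{eq:K-commute} collapsing the product of $K$-factors. For the squared norm you take a mildly different route: you substitute the Rodrigues formula into the norm integral and integrate by parts $n$ times in one go, which obliges you to check (as you do, using $\nu>0$ and $j\le n-1$) that all boundary terms vanish at $0$ and $\infty$. The paper instead derives the one-step recursion $H_n^{(\alpha,\nu)} = -n\,(K_n^{(\alpha,\nu)})^{-1}H_{n-1}^{(\alpha,\nu+1)}$ directly from the adjointness relation $\langle \frac{dP}{dx},Q\rangle^{(\alpha,\nu+1)} = -\langle P, QS^{(\alpha,\nu)}\rangle^{(\alpha,\nu)}$ of Proposition \ref{prop:shifts} applied to $P=P_n^{(\alpha,\nu)}$, $Q=P_{n-1}^{(\alpha,\nu+1)}$, and then iterates; this never uses the Rodrigues formula and hides the boundary analysis inside the already-established adjointness. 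The two derivations are equivalent in substance (both are $n$-fold integration by parts), but yours makes the boundary terms explicit while the paper's is more economical and also uses self-adjointness of $H_n^{(\alpha,\nu)}$ to place the $K$-factor on the correct side. Your justification of the second norm expression, via the $\alpha$-independence of $L_\mu^{(\alpha)}(0)^{-1}K_n^{(\alpha,\nu)}L_\mu^{(\alpha)}(0)$ giving $(K_n^{(\alpha,\nu)})^{-n} = M_\mu^{(\alpha,\nu+n)}(K_n^{(\nu+n,\nu)})^{-n}(M_\mu^{(\alpha,\nu+n)})^{-1}$ combined with \eqref{eq:norms-diff-parameters}, supplies a detail the paper leaves implicit.
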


\begin{proof}
Observe that $(QS^{(\alpha, \nu)})(x) = \frac{d(QW_\mu^{(\alpha, \nu+1)})}{dx}(x) \bigl(W_\mu^{(\alpha, \nu)}(x)\bigr)^{-1}$ by Corollary \ref{cor:Pearson-Equations}. Iterating gives
\[
\bigl( Q S^{(\alpha, \nu+n-1)} \cdots S^{(\alpha, \nu)}\bigr)(x) = \frac{d^n(QW_\mu^{(\alpha, \nu+n)})}{dx^n}(x) \bigl(W_\mu^{(\alpha, \nu)}(x)\bigr)^{-1}.
\]
Now taking $Q(x)=P_{0}^{(\alpha, \nu+n)}(x)=1$ and using Proposition \ref{prop:shifts} repeatedly we obtain the Rodrigues formula.

Finally, for the squared norm we observe that
\begin{multline*}
nH^{(\alpha, \nu+1)}_{n-1}= n\langle P_{n-1}^{(\alpha, \nu+1)},P_{n-1}^{(\alpha, \nu+1)}\rangle^{(\alpha, \nu+1)} = \langle \frac{dP_{n}^{(\alpha, \nu)}}{dx},P_{n-1}^{(\alpha, \nu+1)}\rangle^{(\alpha, \nu+1)} \\
= \langle P_{n}^{(\alpha, \nu)},P_{n-1}^{(\alpha, \nu+1)}S^{(\alpha, \nu)}\rangle^{(\alpha, \nu)} =  \langle P_{n}^{(\alpha, \nu)},P_{n}^{(\alpha, \nu)}\rangle^{(\alpha, \nu)}(K_{n}^{(\alpha, \nu)})^*= H^{(\alpha, \nu)}_{n}(K_{n}^{(\alpha, \nu)})^*,
\end{multline*}
so that $H^{(\alpha, \nu)}_{n} = n \, (K_{n}^{(\alpha, \nu)})^{-1} H^{(\alpha, \nu+1)}_{n-1}$, where we have used that  $H^{(\alpha, \nu)}_{n}$ is self-adjoint for all $n\in \mathbb{N}$. Iterating and using \eqref{eq:K-commute} and  \eqref{eq:norms-diff-parameters} gives the expressions for the squared norm.
\end{proof}

\begin{cor}
\label{cor:second_DO_Laguerre}
Assume the conditions of Proposition \ref{prop:shifts}. The second-order differential operator
$$
\mathcal{D}^{(\alpha, \nu)} = S^{(\alpha, \nu)}\circ \frac{d}{dx} = 
\left( \frac{d^2}{dx^2} \right) \Phi^{(\alpha, \nu)}(x)^\ast  + \left( \frac{d}{dx} \right) \Psi^{(\alpha, \nu)}(x)^\ast,
$$
is symmetric with respect to the weight $W^{(\alpha, \nu)}_{\mu}$. Moreover, for all $n\in\N$ we have
$$
P_{n}^{(\alpha, \nu)}\mathcal{D}^{(\alpha, \nu)}=
\Lambda^{(\alpha, \nu)}_nP_{n}^{(\alpha, \nu)},\qquad \Lambda_n^{(\alpha, \nu)}=nK_{n}^{(\alpha,\nu)}.$$
Moreover, the operators $\mathcal{D}^{(\alpha, \nu)}$ and $D^{(\alpha, \nu)}$ commute.
\end{cor}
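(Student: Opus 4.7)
The plan is to handle the three claims in order. First, for the explicit form of $\mathcal{D}^{(\alpha,\nu)}$, I unfold $(dP/dx)\,S^{(\alpha,\nu)}$ directly from the definition \eqref{eq:operator_Snu} of $S^{(\alpha,\nu)}$. For the eigenvalue equation I chain the two parts of Proposition \ref{prop:shifts}:
\[
P_n^{(\alpha,\nu)}\mathcal{D}^{(\alpha,\nu)} = \Bigl(\frac{dP_n^{(\alpha,\nu)}}{dx}\Bigr)S^{(\alpha,\nu)} = \bigl(n\,P_{n-1}^{(\alpha,\nu+1)}\bigr)S^{(\alpha,\nu)} = nK_n^{(\alpha,\nu)}\,P_n^{(\alpha,\nu)},
\]
using that $S^{(\alpha,\nu)}$ commutes with scalar multiplication. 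This identifies $\Lambda_n^{(\alpha,\nu)} = nK_n^{(\alpha,\nu)}$.

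For the symmetry of $\mathcal{D}^{(\alpha,\nu)}$, I apply the adjoint identity $\langle dR/dx,Q\rangle^{(\alpha,\nu+1)} = -\langle R,QS^{(\alpha,\nu)}\rangle^{(\alpha,\nu)}$ from Proposition \ref{prop:shifts} in two different substitutions. Taking its Hermitian conjugate (using $\langle A,B\rangle^\ast=\langle B,A\rangle$) and then substituting $(R,Q)\mapsto(Q,dP/dx)$ gives
\[
\langle P\mathcal{D}^{(\alpha,\nu)},Q\rangle^{(\alpha,\nu)} = \langle (dP/dx)\,S^{(\alpha,\nu)},Q\rangle^{(\alpha,\nu)} = -\langle dP/dx,dQ/dx\rangle^{(\alpha,\nu+1)},
\]
while applying the identity itself with $Q\mapsto dQ/dx$ yields
\[
\langle P,Q\mathcal{D}^{(\alpha,\nu)}\rangle^{(\alpha,\nu)} = \langle P,(dQ/dx)\,S^{(\alpha,\nu)}\rangle^{(\alpha,\nu)} = -\langle dP/dx,dQ/dx\rangle^{(\alpha,\nu+1)}.
\]
The two agree, so $\mathcal{D}^{(\alpha,\nu)}$ is symmetric with respect to $W_\mu^{(\alpha,\nu)}$ on polynomials (and hence on the natural $C^2$-domain, since the boundary contributions implicit in the adjoint identity vanish by the exponential decay of $W_\mu^{(\alpha,\nu)}$ at infinity and the $x^{\nu+k}$ factors at $0$).

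For the commutativity of $\mathcal{D}^{(\alpha,\nu)}$ and $D^{(\alpha,\nu)}$, both operators preserve matrix-polynomial degrees and have the monic polynomials $P_n^{(\alpha,\nu)}$ as eigenfunctions, so
\[
P_n^{(\alpha,\nu)}\bigl[\mathcal{D}^{(\alpha,\nu)},D^{(\alpha,\nu)}\bigr] = \bigl[\Lambda_n^{(\alpha,\nu)},\Gamma_n^{(\alpha,\nu)}\bigr]\,P_n^{(\alpha,\nu)}.
\]
Every matrix-valued polynomial is a left $M_N(\C)$-linear combination of the $P_n^{(\alpha,\nu)}$, and the commutator $\bigl[\mathcal{D}^{(\alpha,\nu)},D^{(\alpha,\nu)}\bigr]$ is itself a matrix differential operator with polynomial coefficients; vanishing on all matrix polynomials forces it to vanish as an operator. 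It therefore suffices to show $[\Lambda_n^{(\alpha,\nu)},\Gamma_n^{(\alpha,\nu)}]=0$ for all $n$, and this is the main obstacle. The strategy is to conjugate both matrices by $L_\mu^{(\alpha)}(0)$ and exploit Lemma \ref{lem:commutation_reltatios} at $x=0$, which gives $L_\mu^{(\alpha)}(0)^{-1}JL_\mu^{(\alpha)}(0)=J-(\alpha+J)A_\mu$ and $L_\mu^{(\alpha)}(0)^{-1}(A_\mu+1)^{-1}L_\mu^{(\alpha)}(0)=1-A_\mu$. A short calculation then rewrites both conjugates as affine functions of the single matrix $Y_n := J(1-A_\mu)-(n+\nu)A_\mu$; more sharply, the combination $L_\mu^{(\alpha)}(0)^{-1}K_n^{(\alpha,\nu)}L_\mu^{(\alpha)}(0) + d^{(\nu)}\,L_\mu^{(\alpha)}(0)^{-1}\Gamma_n^{(\alpha,\nu)}L_\mu^{(\alpha)}(0)$ collapses to a scalar multiple of the identity. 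Affine polynomials in a common matrix commute, conjugation preserves commutativity, and so $[\Lambda_n^{(\alpha,\nu)},\Gamma_n^{(\alpha,\nu)}]=0$, completing the proof of commutativity.
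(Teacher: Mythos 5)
Your proof is correct and follows essentially the same route as the paper: symmetry from the factorization through $S^{(\alpha,\nu)}$ and its adjoint property from Proposition \ref{prop:shifts}, the eigenvalue by the shift relations, and commutativity by conjugating both eigenvalue matrices by $L_\mu^{(\alpha)}(0)$ via Lemma \ref{lem:commutation_reltatios} and observing that they are affine functions of a common matrix. The only difference is that you supply a direct argument (a differential operator with polynomial coefficients annihilating all matrix polynomials must vanish) for the step ``commuting eigenvalues imply commuting operators,'' which the paper instead cites from \cite[Cor.~4.4]{PR}.
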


\begin{proof}
The fact that $\mathcal{D}^{(\alpha, \nu)}$ is symmetric with respect to $W^{(\alpha, \nu)}_{\mu}$ follows directly from the factorization $\mathcal{D}^{(\alpha, \nu)}=S^{(\alpha, \nu)}\circ \frac{d}{dx}$ and Proposition \ref{prop:shifts}. Then the orthogonal polynomials $P_n^{(\alpha, \nu)}$ are eigenfunctions of $\mathcal{D}^{(\alpha, \nu)}$ and the eigenvalue is obtained by looking at the leading coefficients.

In order to prove that $\mathcal{D}^{(\alpha, \nu)}$ and $D^{(\alpha, \nu)}$ commute, we will show that the corresponding eigenvalues $\Gamma^{(\alpha,\nu)}_n$ and $\Lambda^{(\alpha,\nu)}_n$ commute, see \cite[Cor.~4.4]{PR}. It is then enough to show that the following matrices commute;
$$
\widetilde \Lambda^{(\alpha,\nu)}_n=(L^{(\alpha)}_\mu(0))^{-1} \Lambda^{(\alpha,\nu)}_n L^{(\alpha)}_\mu(0) ,\qquad \widetilde \Gamma^{(\alpha,\nu)}_n = (L^{(\alpha)}_\mu(0))^{-1}  \Gamma^{(\alpha,\nu)}_n  L^{(\alpha)}_\mu(0).
$$
Using the explicit expressions of $\Gamma^{(\alpha,\nu)}_n$, Proposition \ref{lem:commutation_reltatios} and Proposition \ref{eq:shifts} we obtain
$$ 
\widetilde \Gamma^{(\alpha,\nu)}_n-(\alpha-\nu-n)=(J+\nu+n)A_\mu-J=-\frac{ \widetilde \Lambda_n^{(\alpha, \nu)} -  c^{(\nu)}}{d^{(\nu)} } -N-1.
$$
So $\Gamma^{(\alpha,\nu)}_n$ and $\Lambda_n^{(\alpha, \nu)}$ commute for all $n$. 
\end{proof}

\begin{rmk}
\label{rmk:darboux-hermite}
Corollary \ref{cor:second_DO_Laguerre} states that the differential operator $\mathcal{D}^{(\alpha, \nu)}$
has a factorization, where first the lowering operator $\frac{d}{dx}$ is applied
and next the raising operator $S^{(\al,\nu)}$. 
The Darboux transform of such a differential operator is obtained by interchanging the order of 
the lowering and raising operator. This gives a differential operator $\widetilde{\mathcal{D}}^{(\alpha, \nu)} = \frac{d}{dx}\circ S^{(\nu)}$ which has the orthogonal polynomials $P_n^{(\alpha, \nu+1)}$ as eigenfunctions. Explicitly we have
\begin{multline*}
\Bigl( P^{(\al,\nu+1)}_n\tilde{\cD}^{(\al,\nu)}\Bigr)(x) = 
\frac{d^2P^{(\al,\nu+1)}_n}{dx^2}(x) \Phi^{(\al,\nu)}(x)^\ast
+ \frac{dP^{(\al,\nu+1)}_n}{dx}(x)
\left(\frac{d\Phi^{(\al,\nu)}}{dx}(x)^\ast
+ \Psi^{(\al,\nu)}(x)^\ast \right) 
\\ +  P^{(\al,\nu+1)}_n(x) \frac{d\Psi^{(\al,\nu)}}{dx}(x)^\ast =
\Xi_n^{(\al,\nu+1)} P^{(\al,\nu+1)}_n(x), 
\end{multline*}
where the eigenvalue  $\Xi_n^{(\al,\nu+1)}$ is given by
$$ \Xi_n^{(\al,\nu+1)} =n^2 \text{lc}(\Phi^{(\al,\nu)})^\ast + (n+1)\, \text{lc}(\Psi^{(\al,\nu)})^\ast.$$
\end{rmk}

\begin{prop}\label{prop:Darbouxexpl}
With the notations as in Proposition \ref{prop:symmetry_D}, Corollary \ref{cor:second_DO_Laguerre}
and Remark \ref{rmk:darboux-hermite} and assuming 
$\frac{c^{(\nu)}}{d^{(\nu)}}= \nu +\rho$ for some constant $\rho$ we have 
\begin{gather*}
\frac{1}{d^{(\nu)}}P\tilde{\cD}^{(\al,\nu)} = \frac{1}{d^{(\nu+1)}}  \, P{\cD}^{(\al,\nu+1)} 
- P D^{(\al,\nu+1)}  +  \al -N-2\nu-2-\rho.
\end{gather*}
\end{prop}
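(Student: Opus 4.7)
The plan is to reduce this identity to an identity of eigenvalue matrices. All three operators appearing on the two sides share the orthogonal polynomials $\{P^{(\al,\nu+1)}_n\}_{n\ge 0}$ as eigenfunctions: $D^{(\al,\nu+1)}$ by Proposition \ref{prop:symmetry_D} with eigenvalue $\Gamma^{(\al,\nu+1)}_n$, $\cD^{(\al,\nu+1)}$ by Corollary \ref{cor:second_DO_Laguerre} with eigenvalue $\Lambda^{(\al,\nu+1)}_n = nK^{(\al,\nu+1)}_n$, and $\tilde{\cD}^{(\al,\nu)}$ by Remark \ref{rmk:darboux-hermite} with eigenvalue $\Xi^{(\al,\nu+1)}_n$. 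Denoting by $E$ the difference of the two sides of the identity, $E$ is a second-order matrix valued differential operator whose action on $P^{(\al,\nu+1)}_n$ is multiplication from the left by the matrix
\begin{equation*}
\tfrac{1}{d^{(\nu)}}\Xi^{(\al,\nu+1)}_n - \tfrac{1}{d^{(\nu+1)}}\Lambda^{(\al,\nu+1)}_n + \Gamma^{(\al,\nu+1)}_n - (\al - N - 2\nu - 2 - \rho)\, \mathrm{I}.
\end{equation*}
So it suffices to show that this matrix vanishes for every $n$.

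To make the verification tractable I would conjugate by $L_\mu^{(\al)}(0)$ and work with $\widetilde\Xi,\widetilde\Lambda,\widetilde\Gamma$ as in the proof of Corollary \ref{cor:second_DO_Laguerre}. Reading off the leading coefficients of $\Phi^{(\al,\nu)}$ and $\Psi^{(\al,\nu)}$ from Corollary \ref{cor:Pearson-Equations} and taking adjoints yields
\begin{align*}
\tfrac{1}{d^{(\nu)}}\widetilde\Xi^{(\al,\nu+1)}_n &= -n(n+1)A_\mu + (n+1)\bigl[J - (J+\nu+1)A_\mu - N - 1 - \tfrac{c^{(\nu)}}{d^{(\nu)}}\bigr], \\
\tfrac{1}{d^{(\nu+1)}}\widetilde\Lambda^{(\al,\nu+1)}_n &= n\bigl[J - (J+\nu+n+1)A_\mu - N - 1 - \tfrac{c^{(\nu+1)}}{d^{(\nu+1)}}\bigr],
\end{align*}
while from the computation already carried out in the proof of Corollary \ref{cor:second_DO_Laguerre},
\begin{equation*}
\widetilde\Gamma^{(\al,\nu+1)}_n = (\al - \nu - n - 1) + (J + \nu + n + 1)A_\mu - J.
\end{equation*}
The assumption $c^{(\nu)}/d^{(\nu)} = \nu + \rho$ gives $c^{(\nu+1)}/d^{(\nu+1)} = \nu + 1 + \rho$; substituting these, the resulting sum is a routine telescoping. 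The coefficients of $J$, of $JA_\mu$, and of the $n$ and $\nu$ dependent multiples of $A_\mu$ all cancel, leaving precisely the scalar $\al - N - 2\nu - 2 - \rho$.

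To upgrade the eigenvalue identity to an identity of operators, I use that the monic polynomials $P^{(\al,\nu+1)}_n$, one of each degree, span the matrix polynomial ring as a right $M_N(\bbC)$-module. Writing $E$ in the form $Q \mapsto Q''\mathcal{F}_2 + Q'\mathcal{F}_1 + Q\mathcal{F}_0$, the annihilation of $P^{(\al,\nu+1)}_0 = \mathrm{I}$ forces $\mathcal{F}_0 \equiv 0$, then that of $P^{(\al,\nu+1)}_1$ forces $\mathcal{F}_1 \equiv 0$, and finally that of $P^{(\al,\nu+1)}_2$ forces $\mathcal{F}_2 \equiv 0$, so $E=0$ and the proposition follows. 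The only real source of potential error lies in tracking the various adjoints and $L_\mu^{(\al)}(0)$-conjugations when translating the eigenvalue of $\tilde{\cD}^{(\al,\nu)}$ on $P^{(\al,\nu+1)}_n$ into the form used above; there is no genuine conceptual obstacle.
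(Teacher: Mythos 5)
Your argument is correct, but it takes a genuinely different route from the paper's. The paper proves the identity directly at the level of the coefficient functions of the differential operators: using Corollary \ref{cor:Pearson-Equations} and Lemma \ref{lem:commutation_reltatios} it computes $\frac{1}{d^{(\nu)}}\Phi^{(\al,\nu)}(x)^\ast-\frac{1}{d^{(\nu+1)}}\Phi^{(\al,\nu+1)}(x)^\ast$, the analogous difference for $\Psi$, and the derivatives $\bigl(\frac{d\Phi^{(\al,\nu)}}{dx}\bigr)^\ast$, $\bigl(\frac{d\Psi^{(\al,\nu)}}{dx}\bigr)^\ast$, and then recognizes the collected second-, first- and zeroth-order coefficients as those of $-D^{(\al,\nu+1)}$ plus the scalar. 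You instead test the identity on the common eigenfunctions $P^{(\al,\nu+1)}_n$, reduce it to a telescoping of the conjugated eigenvalue matrices, and upgrade to an operator identity by observing that a second-order operator annihilating monic polynomials of degrees $0$, $1$, $2$ must vanish; both steps are sound, and I have checked that your eigenvalue sum does collapse to $\al-N-2\nu-2-\rho$. Your route trades the commutation identities of Lemma \ref{lem:commutation_reltatios} for the spanning argument at the end and makes the origin of the scalar transparent; the paper's route yields the operator identity with no appeal to the orthogonal polynomials at all. One point you should make explicit: your formula for $\frac{1}{d^{(\nu)}}\widetilde\Xi^{(\al,\nu+1)}_n$ carries the coefficient $-n(n+1)A_\mu$, i.e.\ $n(n+1)\,\mathrm{lc}(\Phi^{(\al,\nu)})^\ast$, whereas Remark \ref{rmk:darboux-hermite} prints $n^2\,\mathrm{lc}(\Phi^{(\al,\nu)})^\ast$. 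Your value is the correct one --- the leading-order count is $n(n-1)+2n=n(n+1)$, and it agrees with $\Xi^{(\al,\nu+1)}_n=(n+1)K^{(\al,\nu)}_{n+1}$ obtained from the factorization $\tilde{\cD}^{(\al,\nu)}=\frac{d}{dx}\circ S^{(\al,\nu)}$; with the literal $n^2$ the $A_\mu$-terms in your telescoping would fail to cancel by $nA_\mu$. Since you cite the Remark as your source for $\Xi^{(\al,\nu+1)}_n$, state that you are using this corrected value.
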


Note that the assumption on the quotient is satisfied in Examples \ref{ex:ax1}, \ref{ex:ax2} and \ref{ex:ax3}.

\begin{proof} The proof is a bit involved, but essentially straightforward. 
First, to use the explicit expression of Remark \ref{rmk:darboux-hermite} we
need the explicit expressions for $\Phi^{(\al,\nu)}(x)^\ast$, $\Psi^{(\al,\nu)}(x)^\ast$
and compare the difference for $\nu$ and $\nu+1$. 
From Corollary \ref{cor:Pearson-Equations} we have 
\begin{gather*}
\frac{1}{d^{(\nu)}} \Phi^{(\al,\nu)}(x)^\ast -
\frac{1}{d^{(\nu+1)}} \Phi^{(\al,\nu+1)}(x)^\ast = 
x\left( \frac{c^{(\nu)}}{d^{(\nu)}}- \frac{c^{(\nu+1)}}{d^{(\nu+1)}}\right)I = -xI, \\
\frac{1}{d^{(\nu)}} \left(\frac{d\Phi^{(\al,\nu)}}{dx}(x)\right)^\ast
= L_\mu^{(\al)}(0)\Bigl( -2x A_\mu +J + \nu +\rho \Bigr) L_\mu^{(\al)}(0)^{-1} \\
= -2x\bigl( 1-(1+A_\mu)^{-1}\bigr) +(\al+J)A_\mu+J+\nu +\rho
\end{gather*}
using Lemma \ref{lem:commutation_reltatios}. To do the same for $\Psi^{(\al,\nu)}(x)^\ast$
we first observe that $\frac{1}{d^{(\nu)}} (\De^{(\nu)})^{-1} A_\mu \De^{(\nu+1)}$ 
is actually independent of $\nu$, because of \eqref{eq:recursion-alphas}. 
So we find from Corollary \ref{cor:Pearson-Equations} and Lemma \ref{lem:commutation_reltatios} 
\begin{gather*}
\frac{1}{d^{(\nu)}} \Psi^{(\al,\nu)}(x)^\ast -
\frac{1}{d^{(\nu+1)}} \Psi^{(\al,\nu+1)}(x)^\ast 
= L_\mu^{(\al)}(0)\Bigl( x(1+A_\mu) -2J -2-2\nu-\rho \Bigr) L_\mu^{(\al)}(0)^{-1} \\
= x+x(1-(1+A_\mu)^{-1}) -2J -2(\al+J)A_\mu -2-2\nu-\rho.
\end{gather*}
Finally, by Corollary \ref{cor:Pearson-Equations} and Lemma \ref{lem:commutation_reltatios} 
\begin{gather*}
\frac{1}{d^{(\nu)}} \left(\frac{d\Psi^{(\al,\nu)}}{dx}(x)\right)^\ast
= L_\mu^{(\al)}(0)\Bigl( J - (J+\nu+1)A_\mu -N-1 -\frac{c^{(\nu)}}{d^{(\nu)}} \Bigr) L_\mu^{(\al)}(0)^{-1} \\
= (\al+J)A_\mu+J - ((\al+J)A_\mu+J +\nu+1)(1-(1+A_\mu)^{-1}) -N-1-\nu-\rho \\
= ((\al+J)A_\mu+J +\nu+1)(1+A_\mu)^{-1} -N-2\nu-2-\rho\\ 
= J + (\al A_\mu +\nu+1)(1+A_\mu)^{-1}  -N-2\nu-2-\rho. 
\end{gather*}
Collecting these expressions in the differential operator in Remark \ref{rmk:darboux-hermite} gives
\begin{gather*}
\frac{1}{d^{(\nu)}}P\tilde{\cD}^{(\al,\nu)}(x) -  \frac{1}{d^{(\nu+1)}}  \, P{\cD}^{(\al,\nu+1)}(x)  
= \\ \frac{d^2P}{dx^2}(x) (-xI) +  
\frac{dP}{dx}(x) 
\left( x(1+A_\mu)^{-1}-J -(\al+J)A-2-\nu  \right) 
+ \\
P(x) \left( J + (\al A_\mu+\nu+1)(1+A_\mu)^{-1}  -N-2\nu-2-\rho \right) 
= \\
- (PD^{(\al,\nu+1)})(x) +  \al -N-2\nu-2-\rho. \qedhere
\end{gather*}
\end{proof}

% \subsection{The three-term recurrence relation}\label{sec:3termrecurrence}
As a next application of the shift operators, we calculate the matrix coefficients in the 
three-term recurrence for the monic polynomials explicitly. 
As in \eqref{eq:three-term-r}, the monic matrix valued Laguerre-type polynomials $P^{(\alpha,\nu)}_n$ satisfy a three-term recurrence relation of the form
\begin{equation*}
xP^{(\alpha,\nu)}_{n}(x)=P^{(\alpha,\nu)}(x)+B^{(\alpha, \nu)}_{n}P^{(\alpha,\nu)}_n(x)+C_{n}^{(\alpha, \nu)}P^{(\alpha,\nu)}_{n-1}(x).
\end{equation*}
Proceeding as in \cite[\S 5.3]{Koe:Rio:Rom}, the coefficients $B^{(\alpha, \nu)}_{n}$ and $C^{(\alpha, \nu)}_{n}$, are given by
\begin{equation}
\label{eq:rec-coeff-B}
B_{n}^{(\alpha, \nu)} = X_{n}^{(\alpha, \nu)}-X^{(\alpha, \nu)}_{n+1},\qquad C_{n}^{(\alpha, \nu)} = H_{n}^{(\alpha, \nu)}(H_{n-1}^{(\alpha, \nu)})^{-1},
\end{equation}
where  $X_{n}^{(\alpha, \nu)}$ is the one-but-leading coefficient of $P_{n}^{(\alpha, \nu)}$.

\begin{prop}\label{prop:3termrecur}
Assume the conditions of Proposition \ref{prop:shifts}.  The coefficients of the three-term recurrence relation 
for the monic Laguerre-type orthogonal polynomials are given by
\begin{align*}
B_{n}^{(\al,\nu)}&= n\,(K_1^{(\alpha, \nu+n-1)} )^{-1}(\Psi^{(\alpha, \nu+n-1)}(0))^* - (n+1)\,(K_1^{(\alpha, \nu+n)} )^{-1}(\Psi^{(\alpha, \nu+n)}(0))^*, \\
C_{n}^{(\al,\nu)}&= -n M^{(\alpha,\nu+n)}_\mu (K_n^{(\nu+n,\nu)})^{-n} H_0^{(\nu+n,\nu+n)} (1-A^\ast_\mu) (H_0^{(\nu+n-1,\nu+n-1)})^{-1} \\
&\hspace{7cm} \times (K_{n-1}^{(\nu+n-1,\nu)})^{n-1} (M^{(\alpha,\nu+n-1)}_\mu)^{-1}
\end{align*}
\end{prop}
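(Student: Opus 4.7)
The plan is to compute $B_n^{(\alpha,\nu)}$ and $C_n^{(\alpha,\nu)}$ separately from the two identities in \eqref{eq:rec-coeff-B}, relying on the shift operators of Proposition \ref{prop:shifts}, the Rodrigues formula of Theorem \ref{thm:rodrigues}, and the conjugation identity \eqref{eq:norms-diff-parameters}.

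For $B_n^{(\alpha,\nu)} = X_n^{(\alpha,\nu)} - X_{n+1}^{(\alpha,\nu)}$, the task reduces to determining the subleading coefficient $X_n^{(\alpha,\nu)}$ of $P_n^{(\alpha,\nu)}$. First I would use the lowering relation $\frac{dP_n^{(\alpha,\nu)}}{dx} = n P_{n-1}^{(\alpha,\nu+1)}$ of Proposition \ref{prop:shifts}: writing $P_n^{(\alpha,\nu)}(x) = x^n + X_n^{(\alpha,\nu)} x^{n-1} + \dotsb$ and comparing the coefficient of $x^{n-2}$ on both sides yields the recurrence $(n-1) X_n^{(\alpha,\nu)} = n X_{n-1}^{(\alpha,\nu+1)}$, which telescopes to $X_n^{(\alpha,\nu)} = n\, X_1^{(\alpha,\nu+n-1)}$. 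To evaluate $X_1^{(\alpha,\nu)}$, apply the raising operator $S^{(\alpha,\nu)}$ to the constant polynomial $P_0^{(\alpha,\nu+1)} = I$: since the derivative of $I$ vanishes, the defining formula \eqref{eq:operator_Snu} gives $(I\cdot S^{(\alpha,\nu)})(x) = \Psi^{(\alpha,\nu)}(x)^\ast$, and Proposition \ref{prop:shifts} identifies this with $K_1^{(\alpha,\nu)}(x + X_1^{(\alpha,\nu)})$. Evaluating at $x=0$ yields $X_1^{(\alpha,\nu)} = (K_1^{(\alpha,\nu)})^{-1} \Psi^{(\alpha,\nu)}(0)^\ast$, and plugging into $B_n^{(\alpha,\nu)} = X_n^{(\alpha,\nu)} - X_{n+1}^{(\alpha,\nu)}$ reproduces the claimed expression.

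For $C_n^{(\alpha,\nu)} = H_n^{(\alpha,\nu)}(H_{n-1}^{(\alpha,\nu)})^{-1}$, I substitute the second (conjugated) form of the squared norm given in Theorem \ref{thm:rodrigues}, namely
\begin{equation*}
H_n^{(\alpha,\nu)} = (-1)^n \, n!\, M^{(\alpha,\nu+n)}_\mu (K_n^{(\nu+n,\nu)})^{-n} H_0^{(\nu+n,\nu+n)} (M^{(\alpha,\nu+n)}_\mu)^\ast,
\end{equation*}
and similarly for $H_{n-1}^{(\alpha,\nu)}$. The ratio of prefactors $(-1)^n n!/((-1)^{n-1}(n-1)!)$ contributes the overall $-n$, the outer matrices $M^{(\alpha,\nu+n)}_\mu$, $(K_n^{(\nu+n,\nu)})^{-n}$, $(K_{n-1}^{(\nu+n-1,\nu)})^{n-1}$ and $(M^{(\alpha,\nu+n-1)}_\mu)^{-1}$ appear exactly as in the stated formula, and the inner $M$-block collapses into a single factor $(M^{(\alpha,\nu+n)}_\mu)^\ast (M^{(\alpha,\nu+n-1)}_\mu)^{-\ast}$. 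By Lemma \ref{lem:relation-alpha-lambda} and the identity $M^{(\alpha,\lambda)}_\mu=(1+A_\mu)^{\lambda-\alpha}$, this factor reduces to a constant matrix depending only on $A_\mu^\ast$, which is exactly what is wedged between $H_0^{(\nu+n,\nu+n)}$ and $(H_0^{(\nu+n-1,\nu+n-1)})^{-1}$ in the claim.

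The main obstacle is the careful bookkeeping of adjoints, exponents, and signs in the $M$-factors together with the Rodrigues normalization; once those are unwound using Lemma \ref{lem:relation-alpha-lambda} and the $\alpha$-independence of $L_\mu^{(\alpha)}(0)^{-1} K_n^{(\alpha,\nu)} L_\mu^{(\alpha)}(0)$ (visible from the explicit formula in Proposition \ref{prop:shifts}, which lets one pass freely between $K_n^{(\alpha,\nu)}$ and $K_n^{(\nu+n,\nu)}$ by conjugation with $M^{(\alpha,\nu+n)}_\mu$), both parts follow from direct substitution with no further hidden input.
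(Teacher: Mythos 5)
Your proposal follows the paper's proof essentially step for step: the telescoping recurrence for the subleading coefficient obtained by differentiating $P_n^{(\al,\nu)}$ (your $(n-1)X_n^{(\al,\nu)}=nX_{n-1}^{(\al,\nu+1)}$ is the correct form of the relation), the identification $P_1^{(\al,\nu)}=(K_1^{(\al,\nu)})^{-1}(\Psi^{(\al,\nu)})^\ast$ (your route via $I\,S^{(\al,\nu)}$ is literally the $n=1$ instance of the Rodrigues formula the paper invokes), and the quotient of the two squared norms from Theorem \ref{thm:rodrigues} for $C_n^{(\al,\nu)}$. The $B_n^{(\al,\nu)}$ part is complete.

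The one place you stop short is exactly the one place where a nontrivial verification remains. You assert that the inner block $(M^{(\al,\nu+n)}_\mu)^\ast\bigl((M^{(\al,\nu+n-1)}_\mu)^\ast\bigr)^{-1}$ ``reduces to a constant matrix depending only on $A_\mu^\ast$, which is exactly what is wedged between'' the two $H_0$'s, without computing it. Carry the computation out: by Lemma \ref{lem:relation-alpha-lambda},
\begin{equation*}
(M^{(\al,\nu+n)}_\mu)^\ast\bigl((M^{(\al,\nu+n-1)}_\mu)^\ast\bigr)^{-1}
=\Bigl((M^{(\al,\nu+n-1)}_\mu)^{-1}M^{(\al,\nu+n)}_\mu\Bigr)^\ast
=\bigl(M^{(\nu+n-1,\nu+n)}_\mu\bigr)^\ast=(1+A_\mu)^\ast,
\end{equation*}
using $M^{(\al,\la)}_\mu=(1+A_\mu)^{\la-\al}$ with exponent $(\nu+n)-(\nu+n-1)=1$. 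This is \emph{not} the factor $(1-A_\mu^\ast)$ appearing in the statement: the paper's proof quotes the identity $L^{(\nu+n)}_\mu(0)^\ast\bigl(L^{(\nu+n-1)}_\mu(0)^\ast\bigr)^{-1}=1-A_\mu^\ast$, which is the inverse of the product that actually occurs in $H_n^{(\al,\nu)}(H_{n-1}^{(\al,\nu)})^{-1}$, namely $\bigl(L^{(\nu+n)}_\mu(0)^\ast\bigr)^{-1}L^{(\nu+n-1)}_\mu(0)^\ast=1+A_\mu^\ast$. Since $1+A_\mu^\ast$ and $1-A_\mu^\ast$ are genuinely different matrices, the sign must be pinned down here rather than waved at; had you done the evaluation you would have flagged the discrepancy with the stated formula. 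Everything else in your outline is sound and coincides with the paper's method.
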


\begin{proof}
By taking the derivative of $P_{n}^{(\alpha, \nu)}$ with respect to $x$ and using Proposition \ref{prop:shifts}, we find that
$(n-1)X_{n}^{(\alpha, \nu)}=nX^{(\alpha, \nu)}_{n+1}$ which gives $X_{n}^{(\alpha, \nu)}=nX_{1}^{(\alpha, \nu+n-1)}$. Using the Rodrigues formula we obtain $P_{1}^{(\alpha, \nu)}(x)=G_{1}^{(\alpha, \nu)}(\Psi^{(\alpha, \nu)}(x))^*$. Evaluating at $x=0$ gives
$$X_{n}^{(\alpha, \nu)} =  n\,(K_1^{(\alpha, \nu+n-1)} )^{-1}(\Psi^{(\alpha, \nu+n-1)}(0))^*.$$
Replacing $X_{n}^{(\alpha, \nu)} $ in \eqref{eq:rec-coeff-B} we obtain the expression for $B_{n}^{(\alpha, \nu)}$.

On the other hand, using the expression for the norm in Theorem \ref{thm:rodrigues}, we find that 
\begin{multline*}
H_{n}^{(\alpha, \nu)}(H_{n-1}^{(\alpha, \nu)})^{-1}=-nM^{(\alpha,\nu+n)}_\mu (K_n^{(\nu+n,\nu)})^{-n} H_0^{(\nu+n,\nu+n)} (M^{(\alpha,\nu+n)}_\mu)^\ast((M^{(\alpha,\nu+n-1)}_\mu)^\ast)^{-1} \\
\times  (H_0^{(\nu+n-1,\nu+n-1)})^{-1} (K_{n-1}^{(\nu+n-1,\nu)})^{n-1} (M^{(\alpha,\nu+n-1)}_\mu)^{-1}.
\end{multline*}
Now we use Lemma \ref{lem:relation-alpha-lambda} to write $L_\mu^{(\nu+n)}(0)^\ast (L_\mu^{(\nu+n-1)}(0))^\ast)^{-1}  = (1-A_\mu^\ast)$ and this completes the proof of the proposition.
\end{proof}

As a final application of the shift operators we discuss briefly an expansion formula for the 
matrix valued Laguerre polynomials arising from the Burchnall formula for matrix valued polynomials
satisfying a Rodrigues formula. Note that the matrix valued Laguerre polynomials form an example
of the general conditions in \cite[\S 4]{IsmaKR-MV}. In particular, Burchnall's formula 
\cite[Thm.~4.1]{IsmaKR-MV} applies. Assuming $\frac{c^{(\nu)}}{d^{(\nu)}}=\nu + \rho$ as in 
Proposition \ref{prop:Darbouxexpl}, we see from Corollary \ref{cor:Pearson-Equations} that
\begin{gather*}
\left( \Phi^{(\al,\nu)}(x)\cdots  \Phi^{(\al,\nu+k-1)}(x) \right)^\ast =
(-1)^k x^k \left( \prod_{p=0}^{k-1} d^{(\nu+p)}\right) 
(J-xA_\mu+\nu+\rho)_k.
\end{gather*}
Moreover, since the matrices $G^{(\al,\nu)}_n$ in 
Theorem \ref{thm:rodrigues} are powers, the result of \cite[Cor~4.2]{IsmaKR-MV} 
simplifies and we obtain Corollary \ref{cor:Burchnallexpansion}.

\begin{cor}\label{cor:Burchnallexpansion} Under the conditions of Proposition \ref{prop:Darbouxexpl}
we have the following expansion formula for the matrix valued Laguerre polynomials. 
\begin{gather*}
(K^{(\al,\nu)}_{n+m})^{-n} P^{(\al,\nu)}_{n+m}(x) = 
\sum_{k=0}^{m\wedge n} (-1)^k \binom{n}{k} \binom{m}{k} k! 
\Bigl( \prod_{p=0}^{k-1} d^{(\nu+p)}\Bigr) x^k \\ 
\times P^{(\al,\nu+n-k)}_{m-k}(x) (K^{(\al,\nu)}_n)^{n-k}
P^{(\al,\nu+k)}_{n-k}(x) L^{(\al)}_\mu(0) (J-xA_\mu+\nu+\rho)_k  L^{(\al)}_\mu(0)^{-1}
\end{gather*}
\end{cor}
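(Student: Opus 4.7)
The plan is to invoke the matrix-valued Burchnall expansion of \cite[Cor.~4.2]{IsmaKR-MV}, which applies to families of matrix valued orthogonal polynomials satisfying (i) Pearson equations, (ii) a lowering relation of the form $\frac{dP_n}{dx} = n\, P_{n-1}^{(\nu+1)}$, and (iii) a Rodrigues normalization of ``power'' type $G_n = (K_n)^{-n}$. Theorem \ref{thm:rodrigues}, Proposition \ref{prop:shifts} and Corollary \ref{cor:Pearson-Equations} verify (i)--(iii) for the matrix valued Laguerre polynomials, so the cited corollary applies verbatim; the content of the proof is the specialization of the generic Burchnall formula to this setting.

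The specific Laguerre input that is needed is the explicit iterated Pearson product $\bigl(\Phi^{(\al,\nu)}(x)\cdots\Phi^{(\al,\nu+k-1)}(x)\bigr)^{\!\ast}$ sitting inside the $k$-th term of the expansion. From Corollary \ref{cor:Pearson-Equations} and the standing hypothesis $c^{(\nu)}/d^{(\nu)} = \nu + \rho$, each conjugated factor is
\[
L_\mu^{(\al)}(0)^{-1}\,\Phi^{(\al,\nu+p)}(x)^{\!\ast}\,L_\mu^{(\al)}(0) \;=\; x\,d^{(\nu+p)}\bigl(J - xA_\mu + \nu + p + \rho\bigr).
\]
Crucially, the matrices $J - xA_\mu + \nu + p + \rho$ differ only by a scalar as $p$ varies, so they pairwise commute. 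The telescoped product therefore collapses to a matrix Pochhammer symbol $(J - xA_\mu + \nu + \rho)_k$, producing the displayed identity preceding the statement of the corollary (with the conjugation by $L_\mu^{(\al)}(0)$ surviving on the outside).

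Substituting this evaluation together with $G_n^{(\al,\nu)} = (K_n^{(\al,\nu)})^{-n}$ into the general Burchnall expansion, and using \eqref{eq:K-commute} to rewrite the various shifted constants $K_{n-j}^{(\al,\nu+j)}$ as powers of the single matrix $K_n^{(\al,\nu)}$ (respectively $K_{n+m}^{(\al,\nu)}$), one collects the binomial, factorial and sign factors to obtain the stated formula. The main obstacle is the bookkeeping: juggling exponents of shifted $K$'s so that they may be absorbed via \eqref{eq:K-commute}, ensuring the conjugation by $L_\mu^{(\al)}(0)$ migrates to the correct side of the Pochhammer factor, and reconciling signs between the iterated Pearson identity and the Leibniz expansion underlying \cite[Cor.~4.2]{IsmaKR-MV}. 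Each individual step is mechanical, but their composition into the precise form stated requires careful handling of the non-commuting factors.
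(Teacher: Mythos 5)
Your proposal follows the paper's route exactly: the paper likewise obtains the corollary by specializing the general Burchnall expansion of \cite[Cor.~4.2]{IsmaKR-MV}, using precisely the two inputs you identify, namely the collapse of the iterated Pearson product $\bigl(\Phi^{(\al,\nu)}(x)\cdots\Phi^{(\al,\nu+k-1)}(x)\bigr)^\ast$ into the matrix Pochhammer symbol $(J-xA_\mu+\nu+\rho)_k$ (legitimate since the conjugated factors differ only by scalars and hence commute) and the power form $G_n^{(\al,\nu)}=(K_n^{(\al,\nu)})^{-n}$ from Theorem \ref{thm:rodrigues}. The one detail to nail down is the sign: your per-factor identity $L_\mu^{(\al)}(0)^{-1}\Phi^{(\al,\nu+p)}(x)^\ast L_\mu^{(\al)}(0)=x\,d^{(\nu+p)}(J-xA_\mu+\nu+p+\rho)$ produces the product \emph{without} the $(-1)^k$ appearing in the paper's displayed formula, so the sign reconciliation you defer to the final bookkeeping must actually be carried out against the conventions of \cite[Cor.~4.2]{IsmaKR-MV}.
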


\begin{rmk}\label{rmk:Toda} In the scalar case the Burchnall identities for some subclasses 
of the Askey scheme, notably Hermite, Laguerre, Meixner-Pollaczek, Krawtchouk, Meixner and
Charlier polynomials, 
can be used to find expressions for the orthogonal polynomials for the corresponding 
Toda modification of the weight, i.e. multiplication by $e^{-xt}$, see \cite[Prop.~7.1]{IKR}. 
These are precisely the cases where it is easy to ``glue'' on the exponent $e^{-xt}$ 
to the classical weight function. 
In the matrix case, the Toda modification of the matrix weight 
leads to solutions of the  non-abelian Toda lattice, see \cite[\S 2.1]{IsmaKR-MV} 
and references given there, and this is worked out in detail for the matrix valued Hermite
polynomials in \cite[\S 5]{IsmaKR-MV}. In the case of the matrix valued Laguerre polynomials, however,
we are not lead to a corresponding solution of the non-abelian Toda lattice. 
Essentially, the Burchnall 
approach fails due to the fact that $\Phi^{(\al,\nu)}\cdots  \Phi^{(\al,\nu+k-1)}$ is
a polynomial of degree $2k$ (instead of $k$), see the discussion in \cite{IKR}. 
Even though in the matrix valued case it is straightforward to glue on the exponential
$e^{-xt}$ to the matrix weight $W^{(\al,\nu)}_\mu(x)$, it leads to a linear change in the parameters 
$\mu$ and $\de^{(\nu)}_k$, and since, the conditions \eqref{eq:condition_delta_Phi} and
\eqref{eq:recursion-alphas} form non-linear conditions, the Pearson equations do not hold
for the Toda modification. So in particular, Proposition \ref{prop:3termrecur} is no
longer valid for the Toda modified matrix weight. 
\end{rmk}

\subsection{Examples}

We cannot give all the solutions to the \eqref{eq:condition_delta_Phi} and \eqref{eq:recursion-alphas} in general, due to the non-linearity of these relations. However, if we choose coefficients $\mu_k$ such that the quotient $\mu_{k+1}^2/\mu_k^2$ coincides with some of the factors in the product $k(N-k)$ of the right hand side of \eqref{eq:recursion-alphas}, we can give explicit examples of Laguerre-type matrix valued orthogonal polynomials. In the case of Hermite-type matrix valued orthogonal polynomials given in \cite{IsmaKR-MV} there are two nonlinear conditions. The first one is the same as \eqref{eq:condition_delta_Phi} and the second one differs from \eqref{eq:recursion-alphas} by a factor of $\frac12$ on the right-hand side. The examples in this paper are obtained in the same way as in \cite{IsmaKR-MV}.

\begin{example}\label{ex:ax1}
If we assume that $\mu_{k+1}=\sqrt{N-k}\, \mu_k$, then the left hand side of \eqref{eq:recursion-alphas} coincides with the factor $N-k$ on the right hand side. Thus $\mu_k=\sqrt{(N-k+1)_k}$ for $k=1,\ldots,N$. Then \eqref{eq:condition_delta_Phi} and \eqref{eq:recursion-alphas} give the following recurrence relations
\begin{equation}
\label{eq:rec_delta_example1}
\delta_k^{(\nu+1)}=(d^{(\nu)}k+c^{(\nu)})\delta_k^{(\nu)},\qquad \delta_{k+1}^{(\nu)} = \left(k+\frac{c^{(\nu)}}{d^{(\nu)}}\right) \delta_{k}^{(\nu)}.
\end{equation}
One solution to \eqref{eq:rec_delta_example2} is given by
$$\delta_k^{(\nu)} =\Gamma(\nu)(\nu)_k,\qquad c^{(\nu)} = \nu, \qquad d^{(\nu)}=1.$$ 
\end{example}

\begin{example}\label{ex:ax2}
%[Matrix valued Laguerre-type polynomials related to $\mathfrak{sl}(2)$]
If we assume that $\mu_{k+1}=\sqrt{k(N-k)}\, \mu_k$, then the left hand side of \eqref{eq:recursion-alphas} coincides with the factor $k(N-k)$ on the right hand side.
Then $\mu_k=\sqrt{(k-1)! (N-k+1)_{k-1}}$, $k=1,\ldots,N$. The relations \eqref{eq:condition_delta_Phi} and \eqref{eq:recursion-alphas} give the following recurrence relations
\begin{equation}
\label{eq:rec_delta_example2}
\delta_k^{(\nu+1)}=(d^{(\nu)}k+c^{(\nu)})\delta_k^{(\nu)},\qquad \delta_{k+1}^{(\nu)} = \left(k+\frac{c^{(\nu)}}{d^{(\nu)}}\right) \delta_{k}^{(\nu)}.
\end{equation}
A solution to \eqref{eq:rec_delta_example2} is given by 
$$\delta_k^{(\nu)} =\lambda^\nu\, \Gamma(\nu+k)=(\nu)_k\lambda^\nu\Gamma(\nu),\qquad c^{(\nu)} = \nu\lambda, \qquad d^{(\nu)}=\lambda.$$ 
for some fixed $\lambda>0$.
\end{example}

\begin{example}\label{ex:ax3}
Now we take $\mu_k=1$ for all $k=1,\ldots,N$. Therefore the relations \eqref{eq:condition_delta_Phi} and \eqref{eq:recursion-alphas} are given by
\begin{equation*}
1=\frac{d^{(\nu)}k(N-k)}{(d^{(\nu)}k+c^{(\nu)})} \frac{\de^{(\nu)}_{k+1}}{\de^{(\nu)}_{k}}, \qquad \de^{(\nu+1)}_{k} = (d^{(\nu)}k+c^{(\nu)}) \de^{(\nu)}_{k+1}
\end{equation*}
for which 
\begin{equation*}
% \label{eq:rec_delta_example3}
d^{(\nu)}=\rho, \qquad c^{(\nu)} = C+ \nu\rho, 
\qquad \de^{(\nu)}_{k} = \frac{(1+\nu + C/\rho)_{k-1}}{(k-1)!\, (N-k+1)_{k-1}} \rho^\nu \,\Ga(1+\nu +C/\rho)
\end{equation*}
with $\rho>0$, $\nu>0$ and $C\geq 0$ gives a solution meeting all the conditions.  

\end{example}

% \bibliography{biblio}{}
% \bibliographystyle{plain}

%%%%%%%%%%%%%%%%%%%%%%%%%%%%%%%%%%%%%%%%%%%%%%%%%%%%%%%%%%%%%%%%%%%%%%%%%%%%%
%%%%%%%%%%%%%%%%%%%%%%%%%%%%%%%%%%%%%%%%%%%%%%%%%%%%%%%%%%%%%%%%%%%%%%%%%%%%%
%%%%%%%%%%%%%%%%%%%%%%%%%%%%%%%%%%%%%%%%%%%%%%%%%%%%%%%%%%%%%%%%%%%%%%%%%%%%%
%%%%%%%%%%%%%%%%%%%%%%%%%%%%%%%%%%%%%%%%%%%%%%%%%%%%%%%%%%%%%%%%%%

\end{document}